\documentclass[12pt,english]{article}
\usepackage{amsmath}
\usepackage{graphicx}
\usepackage{url} 
\usepackage{color}
\usepackage{babel}
\usepackage{float}
\usepackage{mathtools}
\usepackage{amsmath}
\usepackage{amsthm}
\usepackage{amssymb}
\usepackage{graphicx}
\usepackage{setspace}
\usepackage{nicefrac}
\usepackage{csquotes}
\usepackage[backend=biber, sorting=none, maxcitenames=2, style=numeric]{biblatex}
\providetoggle{blx@lang@captions@<language>}
\usepackage[bookmarks]{hyperref}
\hypersetup{
	unicode=false,
	pdftoolbar=true,
	pdfmenubar=true,
	pdffitwindow=false,     
	pdfstartview={FitH},    
	pdftitle={nonconvexAG},    
	pdfauthor={Kai Yang},     
	pdfsubject={Subject},   
	pdfcreator={Kai Yang},   
	pdfproducer={Kai Yang}, 
	pdfkeywords={}, 
	pdfnewwindow=true,      
	colorlinks=true,       
	linkcolor=red,          
	citecolor=blue,        
	filecolor=black,      
	urlcolor=cyan           
}

\newcommand{\blind}{0}

\addtolength{\oddsidemargin}{-.75in}%
\addtolength{\evensidemargin}{-.75in}%
\addtolength{\textwidth}{1.5in}%
\addtolength{\textheight}{1.3in}%
\addtolength{\topmargin}{-.8in}%

\PassOptionsToPackage{natbib=true}{biblatex}

\makeatletter

\providecommand{\\}{\\}
\floatstyle{ruled}
\newfloat{algorithm}{tbp}{loa}
\providecommand{\algorithmname}{Algorithm}
\floatname{algorithm}{\protect\algorithmname}

\theoremstyle{plain}
\newtheorem{thm}{\protect\theoremname}
\theoremstyle{plain}
\newtheorem{cor}{\protect\corollaryname}
\theoremstyle{plain}
\newtheorem{lem}{\protect\lemmaname}

%
\usepackage{xcolor}
\usepackage{multirow}\usepackage{colonequals}
\usepackage{amsfonts}\usepackage{tocbibind}\usepackage{bm}\usepackage{bbm}\usepackage{commath}\usepackage{multicol}\allowdisplaybreaks

\usepackage{algorithm, algorithmic}

\providecommand{\corollaryname}{Corollary}
\providecommand{\lemmaname}{Lemma}
\providecommand{\theoremname}{Theorem}

\bibliography{./nonconvexAG.bib}

\begin{document}

\def\spacingset#1{\renewcommand{\baselinestretch}%
{#1}\small\normalsize} \spacingset{1}

\if0\blind
{
  \title{\bf Accelerated Gradient Methods for Sparse Statistical Learning with Nonconvex Penalties}
  \author{Kai Yang\\
    Department of Epidemiology, Biostatistics and \\Occupational Health, McGill University\\
    and \\
    Masoud Asgharian\\
    Department of Mathematics and Statistics, McGill University\\
    and \\
    Sahir Bhatnagar\\
    Department of Epidemiology, Biostatistics and \\Occupational Health, McGill University
    }
  \maketitle
} \fi

\if1\blind
{
  \bigskip
  \bigskip
  \bigskip
  \begin{center}
    {\LARGE\bf Accelerated Gradient Methods for Sparse Statistical Learning with Nonconvex Penalties}
\end{center}
  \medskip
} \fi

\bigskip
\begin{abstract} 

Nesterov's accelerated gradient (AG) is a popular technique to optimize 
objective functions comprising two components: a convex loss and a penalty function. While AG methods perform well for convex penalties, such as the LASSO,  convergence issues may arise when it is applied to nonconvex penalties, such as SCAD. 
A recent proposal generalizes Nesterov's AG method to the nonconvex setting. 
The proposed algorithm requires specification of  several hyperparameters for its practical 
application. 
Aside from some general conditions, there is no explicit rule for selecting the hyperparameters, and how different selection can affect convergence of the algorithm.  
In this article, we propose a hyperparameter setting based on the complexity upper bound to accelerate convergence, and consider the application of this nonconvex AG algorithm to high-dimensional linear and logistic sparse learning problems.
We further establish the rate of convergence and present a simple and useful bound  { to characterize our proposed optimal} damping  sequence. Simulation studies show that convergence can be made, on average, considerably faster than that of the conventional proximal gradient algorithm. Our experiments also show that the proposed method generally outperforms the current state-of-the-art methods in terms of signal recovery. 
\end{abstract}

\noindent%
{\it Keywords:}  Optimization, Statistical Computing, Variable Selection 

\spacingset{1.45}

\section{Introduction}


Sparse learning is an important component of modern data science and is an essential tool for the statistical analysis of high-dimensional data, with significant applications in signal processing and statistical genetics, among others. Penalization
is commonly used to achieve sparsity in parameter estimation. The prototypical optimization problem for obtaining penalized estimators is 
\[
\hat{\boldsymbol{\beta}}\in\arg\min_{\boldsymbol{\beta}\in\mathbb{R}^{q+1}}\left[f\left(\boldsymbol{\beta}\right)+\sum_{j=1}^{q}p_{\lambda}\left(\beta_{j}\right)\right],
\]
where $f:\mathbb{R}^{q+1}\mapsto\mathbb{R}$ is a convex loss function,  $p_\lambda:\mathbb{R}\mapsto\mathbb{R}_{\geq 0}$ constitutes the penalty term, and $\lambda>0$ is the tuning parameter for the penalty. 
Commonly used penalization methods for sparse learning include:
LASSO (Least Absolute Shrinkage and Selection Operator)~\parencite{Tibshirani1996}, Elastic Net~\parencite{Zou2005}, SCAD (Smoothly Clipped Absolute Deviation)~\parencite{Fan2001} and MCP (Minimax Concave Penalty)~\parencite{Zhang2010}.
Among these penalties, parameter estimation with
SCAD and MCP leads to a nonconvex objective function. The nonconvexity poses a challenge in statistical computing, as most methods developed for convex objective functions might not converge when applied to the nonconvex counterpart. 


Various approaches have been proposed to carry out
parameter estimation with SCAD or MCP penalties. \citeauthor{Zou2008}~\parencite*{Zou2008} proposed a local linear approximation, which yields a first-order majorization-minimization (MM) algorithm.  \citeauthor{Kim2008}~\parencite*{Kim2008} discussed a difference-of-convex
programming (DCP) method for ordinary least square estimators penalized by the SCAD penalty,
which was later generalized by \citeauthor{Wang2013}~\parencite*{Wang2013} to a general class of nonconvex
penalties to produce a first-order algorithm. These first-order methods belong to the class of proximal gradient descent methods, which are usually inefficient as relaxation is often expensive~\parencite{Nesterov2004}. The objective function is often ill-conditioned for sparse learning problems, and gradient descent with constant step size is especially inefficient for high-dimensional problems. Indeed, previous studies have suggested that the condition number of a square random matrix grows linearly with respect to its dimension~\parencite{Edelman1988}. Therefore, high-dimensional problems have a large condition number with high probability. Specific to gradient descent with constant step size, the trajectory will oscillate in the directions with a large eigenvalue, moving very slowly toward the directions with a small eigenvalue, making the algorithm inefficient.
\citeauthor{Lee2016}~\parencite*{Lee2016} developed a modified second-order method
originally designed for the ordinary least square loss function penalized by LASSO with extensions
to SCAD and MCP; this attempt was later extended to generalized linear
models, such as logistic and Poisson regression, and Cox's proportional hazard model. Quasi-Newton methods, or a mixture of first and second-order descent methods, have also been applied on nonconvex penalties~\parencite{Ibrahim2012,Ghosh2016}. However, for high-dimensional problems, these second-order methods are slow due to the computational cost of evaluating the secant condition. Concurrently, most first and second-order methods discussed above require a line-search procedure at each step to ensure global convergence, which is prohibitive when the number of parameters to estimate grows large. 
\citeauthor{Breheny2011}~\parencite*{Breheny2011} implemented a coordinate descent method in the {\tt ncvreg} {\tt R} package to carry out estimation for linear models with least squares loss or logistic regression, penalized by SCAD and MCP. \citeauthor{Mazumder2011}~\parencite*{Mazumder2011} also implemented a coordinate descent method in the {\tt sparsenet} {\tt R} package, which carries out a closed-form root-finding update in a coordinate-wise manner for penalized linear regression. Similar to how ill-conditioning makes gradient descent inefficient, coordinate descent methods are generally inefficient when the covariate correlations are high~\parencite{Friedman2007}. Previous studies have also found that coordinate-wise minimization might not converge for some nonsmooth objective functions~\parencite{Spall2012}. 
Furthermore, it is naturally challenging to run coordinate-wise minimization in parallel, as the algorithm must run in a sequential coordinate manner. 


Due to the low computational
cost and adequate memory requirement per iteration, first-order methods
without a line search procedure have become the primary approach for
high-dimensional problems arising from various areas~\parencite{Beck2017}. 
For smooth convex objective functions, Nesterov proposed the {\em accelerated
gradient method} (AG) to improve the rate of convergence from $O(1/N)$ for gradient descent
to $O(1/N^{2})$ while achieving global convergence~\parencite{Nesterov1983}.
Subsequently, Nesterov extended AG to composite convex problems~\parencite{Nesterov2012},
whereas the objective is the sum of a smooth convex function and a simple nonsmooth convex function. With proper step-size choices, Nesterov's
AG was later shown optimal to solve both smooth and nonsmooth
convex programming problems~\parencite{Lan2011}. 


Given that sparse learning problems are often high-dimensional,
Nesterov's AG has been frequently used for \emph{convex} problems
in statistical machine learning (e.g.,~\cite{Simon_2013,Yang_2014,Yu_2015,Akyildiz_2021}).
However, convergence is questionable if the convexity assumption is violated. 
Recently, \citeauthor{Ghadimi2015}~\parencite*{Ghadimi2015} generalized the AG method to nonconvex objective functions, hereafter referred to as the nonconvex AG method, and derived the rates of convergence for both smooth and composite objective functions. While this method can be applied to nonconvex sparse learning problems, several hyperparameters must be set prior to running the algorithm and can be difficult to choose in practice. Indeed, the nonconvex AG method has never been applied in the context of sparse statistical learning problems with nonconvex penalties, such as SCAD and MCP. 


This manuscript presents a detailed analysis of the complexity upper bound of the nonconvex AG algorithm and proposes a hyperparameter setting to accelerate convergence (Theorem \ref{thm:convex}). We further establish the rate of convergence (Theorem \ref{thm:1overk}) and present a simple and useful bound to characterize our proposed optimal damping sequence (Theorem \ref{thm:alpha-k-vanishing} and Corollary \ref{cor:alpha-vanishing}). Our simulation studies on penalized linear and logistic models show that the nonconvex AG method with the proposed hyperparameter selector converges considerably faster than other first-order methods. We also compare the signal recovery performance of the algorithm to that of {\tt ncvreg}, the state-of-the-art method based on coordinate descent, showing that the proposed method outperforms the state-of-the-art coordinate descent method. 


The rest of this manuscript is organised as follows. In Sections \ref{sec:Accelerated-Gradient-Method}, \ref{sec:aga}, \ref{sec:theory},
we will present an analysis of the nonconvex AG algorithm by \citeauthor{Ghadimi2015} to illustrate the algorithm as a generalization of Nesterov's AG. We also present formal results about the effect of hyperparameter settings on the complexity upper bound. Section \ref{sec:Simulation-Studies} will include simulation studies for linear and logistic models penalized by SCAD and MCP penalties. The simulation studies show that i) The AG method using our proposed hyperparameter settings converges faster than commonly used first-order methods for data with various $q/n$ and covariate correlation settings; and ii) our method outperforms the current state-of-the-art method, i.e. {\tt ncvreg}, in terms of signal recovery performance, especially when the signal-to-noise ratios are low. The proofs for the theorems are included in the Appendix \ref{sec:proof}.

\section{Motivation and Setup \label{sec:Accelerated-Gradient-Method}} 


Having built on Nesterov's seminal work, \citeauthor{Ghadimi2015}~\parencite*{Ghadimi2015} 
considered the following composite optimization problem: 
\begin{equation}
    \min_{x\in\mathbb{R}^{q+1}}\Psi\left(x\right)+\chi\left(x\right),\ \Psi\left(x\right)\coloneqq f\left(x\right)+h\left(x\right),
\tag{$\mathcal{P}$}\label{eq:optproblem}
\end{equation}
where $f\in\mathcal{C}_{L_{f}}^{1,1}(\mathbb{R}^{q+1},\mathbb{R})$ is convex, 
$h\in\mathcal{C}_{L_{h}}^{1,1}(\mathbb{R}^{q+1},\mathbb{R})$ is possibly nonconvex, and $\chi$ is a convex function over a bounded domain, and $\mathcal{C}_{L}^{1,1}$ denotes the class of first-order Lipschitz smooth functions with $L$ being the Lipschitz constant. They devised Algorithm \ref{alg:Accelerated-Gradient-Algorithm} discussed in details in next section, and presented a theoretical analysis of their algorithm. 

Some commonly used nonconvex penalties, such as SCAD and MCP, have a form that can naturally be decomposed into summation of a convex and a nonconvex function satisfying the conditions required by \citeauthor{Ghadimi2015}~\parencite*{Ghadimi2015}. When such penalties are added to a smooth convex deviance measure, such as negative of typical log-likelihoods, the resulting optimization problem follows the form of optimization problem \ref{eq:optproblem}. As we show below this is, in particular, the case when the deviance measure is a quadratic loss and the penalty is either SCAD or MCP. The quadratic loss plays the role of $f$. The other two functions, i.e. $h$ and $\chi$ are specified for both SCAD and MCP penalties. Define 


\begin{equation}p_{\lambda,a,\text{SCAD}}\left(\boldsymbol{\beta}\right)=\chi\left(\boldsymbol{\beta}\right)+h_{\text{SCAD}}\left(\boldsymbol{\beta}\right),
\label{eq:SCAD}
\end{equation}
\begin{equation}p_{\lambda,\gamma,\text{MCP}}\left(\boldsymbol{\beta}\right)=\chi\left(\boldsymbol{\beta}\right)+h_{\text{MCP}}\left(\boldsymbol{\beta}\right);
\label{eq:MCP}
\end{equation}
where $\boldsymbol{\beta}\coloneqq\left[\beta_{0},\beta_{1},\dots,\beta_{q}\right]^{T}$, $\chi\left(\boldsymbol{\beta}\right)=\sum_{j=1}^{q}\lambda\lvert\beta_{j}\rvert$, and 
\begin{align}h_{\text{SCAD}}\left(\boldsymbol{\beta}\right)=\sum_{j=1}^{q}\begin{cases}
0; & \lvert\beta_{j}\rvert\leq\lambda\\
\frac{2\lambda\lvert\beta_{j}\rvert-\beta_{j}^{2}-\lambda^{2}}{2\left(a-1\right)}; & \lambda<\lvert\beta_{j}\rvert<a\lambda\\
\frac{1}{2}\left(a+1\right)\lambda^{2}-\lambda\lvert\beta_{j}\rvert; & \lvert\beta_{j}\rvert\geq a\lambda
\end{cases} & \in\mathcal{C}_{L_{\text{SCAD}}}^{1,1}\label{eq:SCAD-Lsmoothness}\\
h_{\text{MCP}}\left(\boldsymbol{\beta}\right)=\sum_{j=1}^{q}\begin{cases}
-\frac{\beta_{j}^{2}}{2\gamma}; & \lvert\beta_{j}\rvert<\gamma\lambda\\
\frac{1}{2}\gamma\lambda^{2}-\lambda\lvert\beta_{j}\rvert; & \lvert\beta_{j}\rvert\geq\gamma\lambda
\end{cases} & \in\mathcal{C}_{L_{\text{MCP}}}^{1,1}\label{eq:MCP-Lsmoothness}
\end{align}
In the above equations, $\lambda>0,a>2,\gamma>1$ are the penalty tuning parameters. It is trivial that, in \eqref{eq:SCAD} and \eqref{eq:MCP},
$\chi\left(\boldsymbol{\beta}\right)$ is convex and the remaining term is
a first-order smooth concave function. 
In view of the optimization problem \ref{eq:optproblem}, when applying SCAD/MCP on a convex $\mathcal{C}_{L_{\ell}}^{1,1}$ statistical learning objective function, $f=-2\ell$ will be the convex component; $h_{\text{SCAD}},h_{\text{MCP}}$ will be the smooth nonconvex component with $L_{SCAD}=\frac{1}{a-1}$ and $L_{MCP}=\frac{1}{\gamma}$; and $\chi=\sum_{j=1}^{q}\lambda\lvert\beta_j\rvert$ will be the nonsmooth convex component. For high-dimensional statistical learning problems, 
the L-smoothness constant for the smooth nonconvex component, $L_{SCAD}$ and $L_{MCP}$, are often negligible when compared to the greatest singular value of the design matrix~\parencite{Meckes2021}. In statistical learning applications, most unconstrained problems can, in fact, be reduced to problems over a bounded domain, as information often suggests the boundedness of the variables.

\section{The Accelerated Gradient Algorithm} \label{sec:aga}
This Section comprises two subsections. Subsection \ref{sec:nagm} includes an algorithm proposed by \citeauthor{Ghadimi2015}~\parencite*{Ghadimi2015} for solving the composite optimization problem \ref{eq:optproblem}. In Subsection \ref{sec:hnagm} we propose an approach for selecting the hyperparameters of the algorithm by minimizing the complexity upper bound \eqref{eq:complexity-bound}

\subsection{Nonconvex Accelerated Gradient Method} \label{sec:nagm}

Building on Nesterov's AG algorithm, \citeauthor{Ghadimi2015}~\parencite*{Ghadimi2015} proposed the following algorithm for solving the composite optimization problem \ref{eq:optproblem}. 

\begin{algorithm}[H] 
\caption{Accelerated Gradient Algorithm\label{alg:Accelerated-Gradient-Algorithm}}
\begin{algorithmic}
\REQUIRE {starting point $x_0\in \mathbb{R}^{q+1}$, $\{\alpha_k\}$ s.t. $\alpha_1=1$ and $\forall k\geq 2, 0<\alpha_k<1$, $\{\omega_k>0\}$, and $\{\delta_k>0\}$} 		\STATE {0. Set $x_0^{ag}=x_0$ and $k=1$} 		\STATE {1. Set \begin{equation} 			\label{eqn:x-md} 				x_{k}^{md}=\alpha_{k}x_{k-1}^{ag}+\left(1-\alpha_{k}\right)x_{k-1} 			\end{equation} } 		\STATE {2. Compute $\nabla\Psi\left(x_{k}^{md}\right)$ and set \begin{align} 				x_{k}= & x_{k-1}-\delta_{k}\nabla\Psi\left(x_{k}^{md}\right) \text{ (smooth)}& x_{k}= & \mathcal{P}\left(x_{k-1},\nabla\Psi\left(x_{k}^{md}\right),\delta_{k}\right) \text{ (composite)}\label{eqn:x-k}\\ 				x_{k}^{ag}= & x_{k}^{md}-\omega_{k}\nabla\Psi\left(x_{k}^{md}\right) \text{ (smooth)}& x_{k}^{ag}= & \mathcal{P}\left(x_{k}^{md},\nabla\Psi\left(x_{k}^{md}\right),\omega_{k}\right) \text{ (composite)}\label{eqn:x-ag} 		\end{align}} 		\STATE {3. Set $k=k+1$ and go to step 1} 		\ENSURE {Minimizer $x_N^{md}$} 	\end{algorithmic}
\end{algorithm}

In Algorithm \ref{alg:Accelerated-Gradient-Algorithm}, ``smooth''
represents the updating formulas for smooth problems, and ``composite''
represents the update formulas for composite problems, and $\mathcal{P}$
is the proximal operator defined as: 
\[
\mathcal{P}\left(x,y,c\right)\coloneqq\arg\min_{u\in\mathbb{R}^{q+1}}\left\{ \left\langle y,u\right\rangle +\frac{1}{2c}\left\Vert u-x\right\Vert ^{2}+\chi\left(u\right)\right\} .
\]
It is evident that the composite counter-part of the algorithm is
the Moreau envelope smoothing of the simple nonconvex function; for
this reason, in later analysis of the algorithm, we will use smooth
updating formulas for the sake of parsimony. As an interpretation
of the algorithm, $\left\{ \alpha_{k}\right\} $ controls the damping
of the system, and $\omega_{k}$ controls the step size for the ``gradient
correction'' update for momentum method. In what follows, $\Gamma_{k}$ is defined recursively as:
\[
\Gamma_{k}\coloneqq\begin{cases}
1, & k=1;\\
\left(1-\alpha_{k}\right)\Gamma_{k-1}, & k\geq2.
\end{cases}
\] 
\citeauthor{Ghadimi2015}~\parencite*{Ghadimi2015} proved that under the following conditions: 
\begin{align}
 & \alpha_{k}\delta_{k}\leq\omega_{k}<\frac{1}{L_{\Psi}},\ \forall k=1,2,\dots N-1\text{ and}\label{eq:convcond1}\\
 & \frac{\alpha_{1}}{\delta_{1}\Gamma_{1}}\geq\frac{\alpha_{2}}{\delta_{2}\Gamma_{2}}\geq\cdots\geq\frac{\alpha_{N}}{\delta_{N}\Gamma_{N}},\label{eq:convcond2}
\end{align}
the rate of convergence for composite optimization problems can be
illustrated by the following complexity upper bound: 
\begin{align}
 & \min_{k=1,\dots,N}\left\Vert \mathcal{G}\left(x_{k}^{md},\nabla\Psi\left(x_{k}^{md}\right),\omega_{k}\right)\right\Vert ^{2}\nonumber \\
 & \hspace{1.2 in} \leq \left[\sum_{k=1}^{N}\Gamma_{k}^{-1}\omega_{k}\left(1-L_{\Psi}\omega_{k}\right)\right]^{-1}\left[\frac{\left\Vert x_{0}-x^{*}\right\Vert ^{2}}{\delta_{1}}+\frac{2L_{h}}{\Gamma_{N}}\left(\left\Vert x^{*}\right\Vert ^{2}+M^{2}\right)\right].\label{eq:complexity-bound}
\end{align}
In the above inequality, $\mathcal{G}\left(x_{k}^{md},\nabla\Psi\left(x_{k}^{md}\right),\omega_{k}\right)$
is the analogue to the gradient for smooth functions defined
by: 
\[
\mathcal{G}\left(x,y,c\right)\coloneqq\frac{1}{c}\left[x-\mathcal{P}\left(x,y,c\right)\right].
\]
In accelerated gradient settings, $x$ corresponds to the past iteration,
$y$ corresponds to the smooth gradient at $x$, and $c$ corresponds
to the step size taken. 

\subsection{Hyperparameters for Nonconvex Accelerated Gradient Method} \label{sec:hnagm}

Here we discuss how hyperparameters, $\alpha_k$, $\omega_k$ and $\delta_k$ can be selected to accelerate convergence of Algorithm \ref{alg:Accelerated-Gradient-Algorithm} by minimizing the complexity upper bound. 
From Lemma \ref{lem:convergence-cond-meaning}, it is clear that the conditions \eqref{eq:convcond1} and \eqref{eq:convcond2} merely present a lower
bound for the vanishing rate of $\left\{ \alpha_{k}\right\} $. We
also observe that the right-hand side of \eqref{eq:bifurcation} is
monotonically increasing with respect to $\alpha_{k}$; thus, to obtain
the maximum values for $\left\{ \alpha_{k}\right\} $, it is sufficient
to maximize $\alpha_{k}$ recursively. 

Using \eqref{eqn:x-md}, \eqref{eqn:x-k}, and \eqref{eqn:x-ag}, we have 
\begin{align*}
\frac{x_{k+1}^{md}-\left(1-\alpha_{k+1}\right)x_{k}^{ag}}{\alpha_{k+1}}= & \frac{x_{k}^{md}-\left(1-\alpha_{k}\right)x_{k-1}^{ag}}{\alpha_{k}}-\delta_{k}\nabla\Psi\left(x_{k}^{md}\right)\text{ and}\\
x_{k}^{ag}= & x_{k}^{md}-\omega_{k}\nabla\Psi\left(x_{k}^{md}\right).
\end{align*}
By sorting out the terms in the above equations, we obtain the following
updating formulas: 
\begin{align}
x_{k}^{ag}= & x_{k}^{md}-\omega_{k}\nabla\Psi\left(x_{k}^{md}\right)\label{eq:grad-correction}\\
x_{k+1}^{md}= & x_{k}^{ag}+\alpha_{k+1}\cdot\left(\frac{1}{\alpha_{k}}-\frac{\delta_{k}}{\omega_{k}}\right)\cdot\left(\omega_{k}\nabla\Psi\left(x_{k}^{md}\right)\right)+\alpha_{k+1}\cdot\left(\frac{1}{\alpha_{k}}-1\right)\left(x_{k}^{ag}-x_{k-1}^{ag}\right)\label{eq:nonconvex-momentum}
\end{align}


Compared to Nesterov's AG, the AG method proposed by Ghadimi and
Lan differs by the convergence conditions \eqref{eq:convcond1} and
\eqref{eq:convcond2}, and the inclusion of the term $\alpha_{k+1}\cdot\left(\frac{1}{\alpha_{k}}-\frac{\delta_{k}}{\omega_{k}}\right)\cdot\left(\omega_{k}\nabla\Psi\left(x_{k}^{md}\right)\right)$
in \eqref{eq:nonconvex-momentum}. Since $\alpha_{k+1}\cdot\left(\frac{1}{\alpha_{k}}-\frac{\delta_{k}}{\omega_{k}}\right)\geq0$
is implied by convergence condition \eqref{eq:convcond1}, this added
term functions as a step to reduce the magnitude of ``gradient correction''
presented in \eqref{eq:grad-correction}: the resulting framework
will keep the same momentum compared to Nesterov's AG, but the momentum
step update will occur at a midpoint between $x_{k}^{ag}$ and $x_{k}^{md}$
to yield $x_{k+1}^{md}$. Such a framework suggests that the proposed
algorithm is merely a midpoint generalization in the gradient correction
step of Nesterov's AG. Therefore, \emph{the acceleration occurs to
the convex component $f$ of the objective function $\Psi$}. Following
this intuition, we proceed to investigate the optimization hyperparameter
settings for the most accelerating effect in Theorem \ref{thm:convex}
based on the idea of minimizing the complexity upper bound \eqref{eq:complexity-bound}
when the objective function is convex; i.e., when $h \equiv 0$. 

It can be deduced from \eqref{eq:bifurcation} that an increasing
sequence of $\left\{ \delta_{k}\right\} $ allows a slower vanishing
rate for $\left\{ \alpha_{k}\right\} $. Specifically, the existence
of $\delta_{1}$ in \eqref{eq:complexity-bound} can be explained
as the following: the momentum initialization step in Algorithm \ref{alg:Accelerated-Gradient-Algorithm}
indicates that $x_{1}^{md}=x_{0}^{ag}=x_{0}$. We also have $x_{1}^{ag}=x_{1}^{md}-\omega_{1}\nabla\Psi\left(x_{1}^{md}\right)=x_{0}^{ag}-\omega_{1}\nabla\Psi\left(x_{0}\right)$
for smooth problems or $x_{1}^{ag}=\mathcal{P}\left(x_{1}^{md},\nabla\Psi\left(x_{1}^{md}\right),\omega_{1}\right)=\mathcal{P}\left(x_{0}^{ag},\nabla\Psi\left(x_{0}\right),\omega_{1}\right)$
for composite problems. In view of \eqref{eq:nonconvex-momentum},
the momentum initializes as $x_{1}^{ag}-x_{0}^{ag}=-\omega_{1}\nabla\Psi\left(x_{0}\right)$
for smooth problems. Thus, should $\delta_{1}<\omega_{1}$
take a smaller value, $\alpha_{2}\cdot\left(\frac{1}{\alpha_{1}}-\frac{\delta_{1}}{\omega_{1}}\right)>0$;
i.e., $x_{2}^{md}$ is a convex combination of $x_{1}^{ag}$ and the
initial point $x_{0}$, and the smaller $\delta_{1}$ is, the closer
$x_{2}^{md}$ is to $x_{0}$. Meanwhile, a smaller $\delta_{1}$
allows a faster increasing sequence $\left\{ \delta_{k}\right\} $;
hence a slower-vanishing sequence $\left\{ \alpha_{k}\right\} $ can
be achieved to incorporate more momentum. This process can be interpreted as follows: when $x_{2}^{md}$ does not retain the full step update
from the initial point $x_{0}$, more initial momentum will be allowed
to accumulate, as the initial momentum is in the same direction as
the update. We therefore choose $\delta_{1}=\omega_{1}$;
i.e., to let $x_{2}^{md}$ retain fully the update from $x_{0}$ in
the direction of $-\omega_{1}\nabla\Psi\left(x_{0}\right)$, such that
no \emph{excess} initial momentum will be needed to account for initial
update deficiency in this direction.

\section{Theoretical Analysis of the Algorithm}\label{sec:theory}


For gradient methods without a line-search procedure, the
step size for the gradient correction is usually set to be a constant.
Based on this convention, we assume $\omega_{k}=\beta$ for $k=1,2,\dots,N$.
Theorem \ref{thm:convex} below presents the optimal choice of hyperparameters under mild conditions. 
\begin{thm}
\label{thm:convex} Assume 
conditions \eqref{eq:convcond1} and \eqref{eq:convcond2} hold. Let $\delta_{1}=\omega_{k}=\omega$ and $h=0$.
Then the complexity upper bound \eqref{eq:complexity-bound} is minimized by: 
\begin{align}
\bar{\alpha}_{k+1}= & \frac{2}{1+\sqrt{1+\frac{4}{\bar{\alpha}_{k}^{2}}}},\ \bar{\alpha}_{1}=1\label{eq:alpha-settings},\\
\bar{\delta}_{k+1}= & \frac{\bar{\omega}}{\bar{\alpha}_{k+1}}\label{eq:lambda-setting},\\
\bar{\omega}= & \frac{2}{3L_{\Psi}}\label{eq:beta-setting}.
\end{align}
\end{thm}
\begin{proof}
See Appendix \ref{sec:proof-thm-1}.
\end{proof}

As illustrated by the proof of the above theorem, the optimization hyperparameter
settings \eqref{eq:alpha-settings}, \eqref{eq:lambda-setting}, and
\eqref{eq:beta-setting} allow for the greatest values of $\left\{ \alpha_{k}\right\} $
under the constant gradient-correction step size and maximum initial
update assumptions; i.e., condition 1. Such settings allow the most
acceleration for the convex component. Although a greater momentum
will result in a much faster convergence at the initial stage of the
algorithm, it will also result in oscillations of larger magnitudes
near the minimizer. Therefore, in the following theorem, we will show
that the complexity upper bound will always maintain $O\left(1/N\right)$
rate of convergence. This observation implies that the accelerated gradient method's worst-case scenario is at least as good as $O\left(1/N\right)$ for gradient descent in terms of the rate of convergence. 

\begin{thm}
\label{thm:1overk} Assume conditions \eqref{eq:convcond1}
and \eqref{eq:convcond2} hold. Then under the assumptions of Theorem
\ref{thm:convex}
, the complexity upper bound is $O\left(1/N\right)$. 
\end{thm}
\begin{proof}
See Appendix \ref{sec:proof-thm-2}.
\end{proof}

The recursive formula for optimal momentum hyperparameter, $\left\{ \alpha_{k}\right\} $, as presented in \eqref{eq:alpha-settings},
is of a rather complicated structure. The next theorem illustrates the vanishing rate of $\left\{ \alpha_{k}\right\} $.  

\begin{thm}
\label{thm:alpha-k-vanishing} 
Let $\bar{\alpha}_{1}=1$ and \eqref{eq:alpha-settings} holds. Then
\begin{equation}
\label{eq:damping-bounds}
\frac{2}{\left(1+a\cdot k^{-b}\right)k+1}<\bar{\alpha}_{k}\leq\frac{2}{k+1}, \quad k=1,\dots,N, 
\end{equation}
for any $a>0,\ 0<b<1$, such that 
\begin{equation}
    \label{eq:k-vanishing-cond} a\left(1-b\right)\cdot2^{2-b}-ab\left(1-b\right)\cdot2^{-b}-1\geq0.
\end{equation}
\end{thm}
\begin{proof}
See Appendix \ref{sec:proof-thm-3}.
\end{proof}

The following corollary establishes a tight bound for the damping sequence, hence providing the speed of convergence of our proposed 
optimal damping sequence $\left\{ \bar{\alpha}_{k}\right\} $
to $\frac{2}{k+1}$. 

\begin{cor} \label{cor:alpha-vanishing}
The lower bound in \eqref{eq:damping-bounds} is maximized at
\[
\bar{a}_k=\frac{2^{\bar{b}_k}}{\left(1-\bar{b}_k\right)\left(4-\bar{b}_k\right)} \quad \text{and} \quad  
\bar{b}_k=\frac{2+5\left(\log\frac{2}{k}\right)+\sqrt{9\left(\log\frac{2}{k}\right)^{2}+4}}{2\left(\log\frac{2}{k}\right)}\label{optimal-b} \quad \text{for} \quad k \geq 8. 
\]
The lower bound in \eqref{eq:damping-bounds} therefore becomes 
\begin{equation}
\frac{k+1}{2}-\bar{\alpha}_{k}^{-1}=O\left(\log k\right)
\end{equation}
\end{cor}


\begin{proof}
See Appendix \ref{sec:proof-cor-1}.
\end{proof}

To better illustrate Corollary \ref{cor:alpha-vanishing}, we plot the value of $\log\left(\bar{a}_kk^{-b}\right)$ v.s. $(k,b)$ in Figure \ref{fig:numerical-plot}.
The plot shows that as $k$ grows large, the optimizer $\bar{b}_k$ converges to $1$ at a very slow rate. It also reflects on the speed of $1+\bar{a}_k\cdot k^{-\bar{b}_k}$, the coefficient of $k$ in the denominator of the lower bound in \eqref{eq:damping-bounds}, goes to $1$ as $k$ increases.

\begin{figure}[H]
\centering{}\includegraphics[scale=.8]{./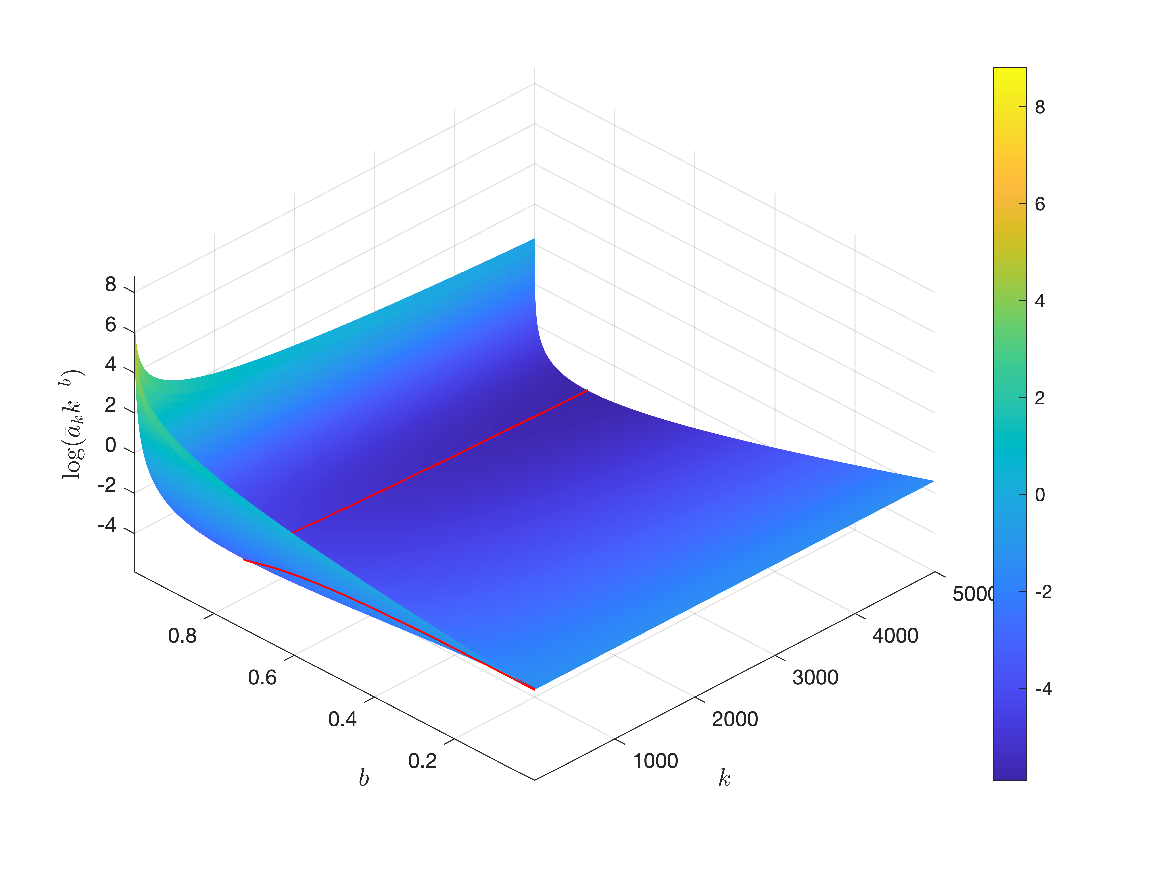}\caption{Numerical plots for Corollary \ref{cor:alpha-vanishing}. The figure plots $\log\left(\bar{a}_kk^{-b}\right)$ v.s. $k$ and $b$; the red line plots its minimizer $\bar{b}_k=\frac{2+5\left(\log\frac{2}{k}\right)+\sqrt{9\left(\log\frac{2}{k}\right)^{2}+4}}{2\left(\log\frac{2}{k}\right)}$ for each $k$. The plot reflects on the speed for the coefficient of $k$ in the denominator of the lower bound in \eqref{eq:damping-bounds} converges to $1$. The red line shows that $\bar{b}_k$ converges to $1$ at an extremely slow rate. \label{fig:numerical-plot}}
\end{figure}

\section{Simulations Studies\label{sec:Simulation-Studies} }


In this section, we conduct two sets of simulation studies for nonconvex penalized linear and logistic
models. We first visualize the convergence rates and signal recovery performance for each set of simulation studies using a single simulation replicate. Second, we compare the convergence rates across the first-order methods with varying $q/n$ ratios and covariate correlations for $100$ simulation replications. Lastly, we compare the signal recovery performance using our method to the state-of-the-art method, {\tt ncvreg}~\parencite{Breheny2011}, with varying covariate correlations and signal-to-noise ratios (SNRs) for $100$ simulation replications. Since the iterative complexity differs for the first-order methods and coordinate descent methods, the convergence rates in terms of the number of iterations are not directly comparable. { Thus, we choose to compare the computing time between AG, proximal gradient descent, and coordinate descent.}

\subsection{Simulation Setup} \label{sec:sim-setup}

Linear models with the OLS loss function is a popular method for modelling a continuous response. We aim to achieve signal recovery by solving the following problem for penalized linear models:
\[
\arg\min_{\boldsymbol{\beta}\in\mathbb{R}^{q+1}}\frac{1}{2n}\left\Vert \mathbf{X}\boldsymbol{\beta}-\mathbf{y}\right\Vert _{2}^{2}+\sum_{j=1}^{q}p_{\lambda}\left(\beta_{j}\right),
\]
where $p_\lambda:\mathbb{R}\mapsto\mathbb{R}_{\geq 0}$ is the SCAD or MCP penalty function. 
To compare the convergence rates across the first-order methods, we choose different $q/n$ ratios and the strength of correlation, $\tau$, between the covariates. These two parameters are most likely to impact the convergence rates. Median and corresponding $95\%$ bootstrap confidence intervals from $1000$ bootstrap replications for the number of iterations required for the iterative objective values to make a fixed amount of descent are reported. To compare the signal recovery performance between our AG method and the state-of-the-art package {\tt ncvreg}, we performed $100$ simulation replications with varying SNRs and covariate correlations, as they directly impact the signal recovery performance. The simulation studies we performed adapt the following setups:

\begin{itemize}
\item The total number of observations $n=1000$ for visualization plots and signal recovery performance comparison, and $n=200,500,1000,3000$ for convergence rate { and computing time} comparisons.
\item For visualization purposes, we perform one simulation replicate with the number of covariates $q=2004$, with $4$ nonzero signals being $2,-2,8,-8$. We perform $100$ simulation replications with the number of covariates $q = 2050$, with $5$ blocks of “true” signals equal-spaced with $500$ zeros in-between for convergence rate { and computing time comparison, as well as} signal recovery performance comparison. For each simulation replicate, the blocks of the ``true'' signals are simulated from $N_{10}\left(0.5,1\right)$, $N_{10}\left(5,2\right)$, $N_{10}\left(10,3\right)$, $N_{10}\left(20,4\right)$, $N_{10}\left(50,5\right)$, respectively. 
\item The design matrix, $\mathbf{X}$, is simulated from a multivariate Gaussian distribution
with mean $0$. The covariance matrix $\boldsymbol{\Sigma}$ is a $\tau-$Toeplitz matrix, where $\tau=0.5$ for the visualization plots and $\tau=0.1,0.5,0.9$ for the convergence rate { and computing time comparison, as well as} signal recovery performance comparison. All covariates are standardized; i.e., centered by the sample mean and scaled by the sample standard deviation.
\item The signal-to-noise ratio is set as $\text{SNR}=\frac{\sqrt{\boldsymbol{\beta}_{\text{true}}^{T}\boldsymbol{\Sigma}\boldsymbol{\beta}_{\text{true}}}}{\sigma}$, where $\boldsymbol{\beta}_{\text{true}}$ are the ``true'' coefficient values, and $\sigma$ is used as the residual standard deviation. $\text{SNR}=5$ for visualization plots, $\text{SNR}=3$ for convergence rate comparison, and $\text{SNR}=1,3,7,10$ for signal recovery performance comparison.
\item { For visualization plots, convergence rate and computing time comparisons,} we take $\lambda=0.5,a=3.7$ for SCAD and $\lambda=0.5,\gamma=3$ for MCP, unless otherwise specified. For signal recovery rate comparison, $\lambda$ sequence consists of
$50$ values equal-spaced from $\lambda_{\max}$\footnote{$\lambda_{\max}$ is the minimal value for $\lambda$ such that all
penalized coefficients are estimated as $0$.} to $0$. The tuning parameter $\lambda$ is chosen to minimize the (non-penalized) loss function value on a validation set of the same size as the training set.
\item { For signal recovery performance comparison, we use the same objective function as {\tt ncvreg} to ensure that the same value of penalty tuning parameters results in the same degree of penalization. We also adapt the same strong rule setup as {\tt ncvreg}~\parencite{Lee2015}.}
\end{itemize}

{ To compare the gradient-based methods and the coordinate descent method, we compare the computing time when both coded in {\tt Python/CuPy}. The coordinate descent method was coded based on the state-of-the-art pseudo-code~\parencite{Breheny2011}. All of the computing was carried out on a NVIDIA A100 GPU with CUDA compute capability of 8.0 on the Narval computing cluster from Calcul Qu\`ebec/Compute Canada. Furthermore, we also excluded the computation of the L-smoothness parameter for the coordinate descent method in our simulations.}

The simulation setups for penalized logistic models are similar to those above for penalized linear models, except that the active coefficients are set differently to account for the exponential scale inherent to the logistic regression. For the single-replicate visualization simulations, we let the $4$ nonzero signals be $0.5,-0.5,0.8,-0.8$. For the simulations with $100$ replications to compare the convergence rate and signal recovery performance, we simulate the $5$ blocks of the ``true'' signals from $N_{10}\left(0.5,1\right)$, $N_{10}\left(0.5,1\right)$,
$N_{10}\left(-0.5,1\right)$, $N_{10}\left(-0.5,1\right)$, $N_{10}\left(1,1\right)$, respectively. The SNR for logistic regression has the same definition as linear models, with Gaussian noise added to the generated continuous predictor $\mathbf{X}\boldsymbol{\beta}_{true}$. The binary outcomes are independent Bernoulli realizations, with probabilities being the logistic transforms of the continuous response.

\subsection{Simulation Results} \label{sec:sim-results}

\subsubsection{Penalized Linear Regression} \label{sec:linear-sim}

\begin{figure}[ht]
\centering{}\includegraphics[scale=0.5]{./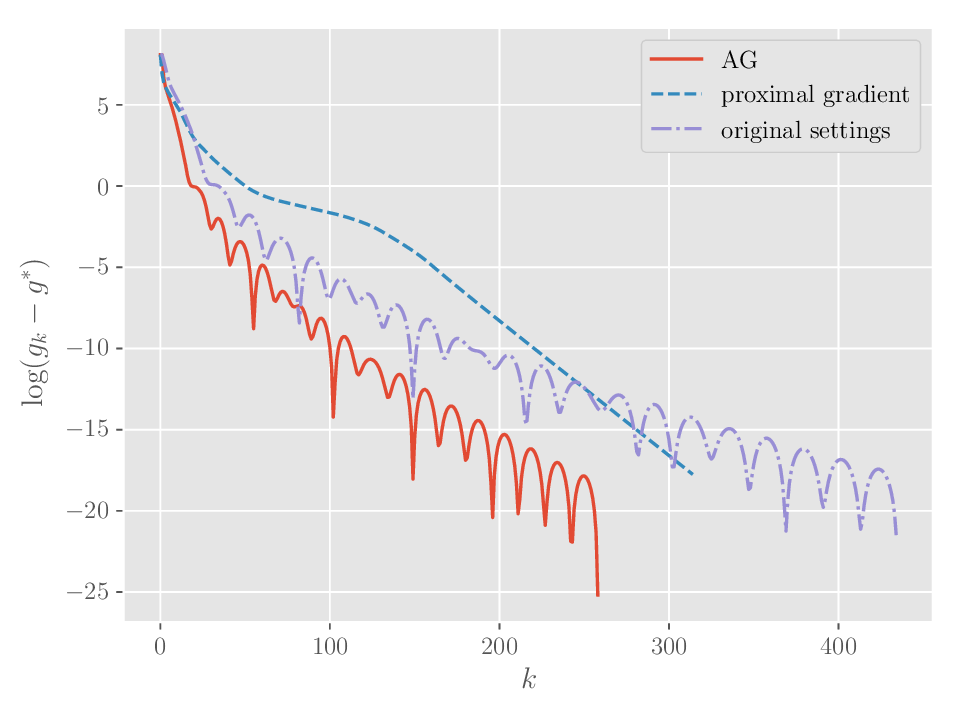}\includegraphics[scale=0.5]{./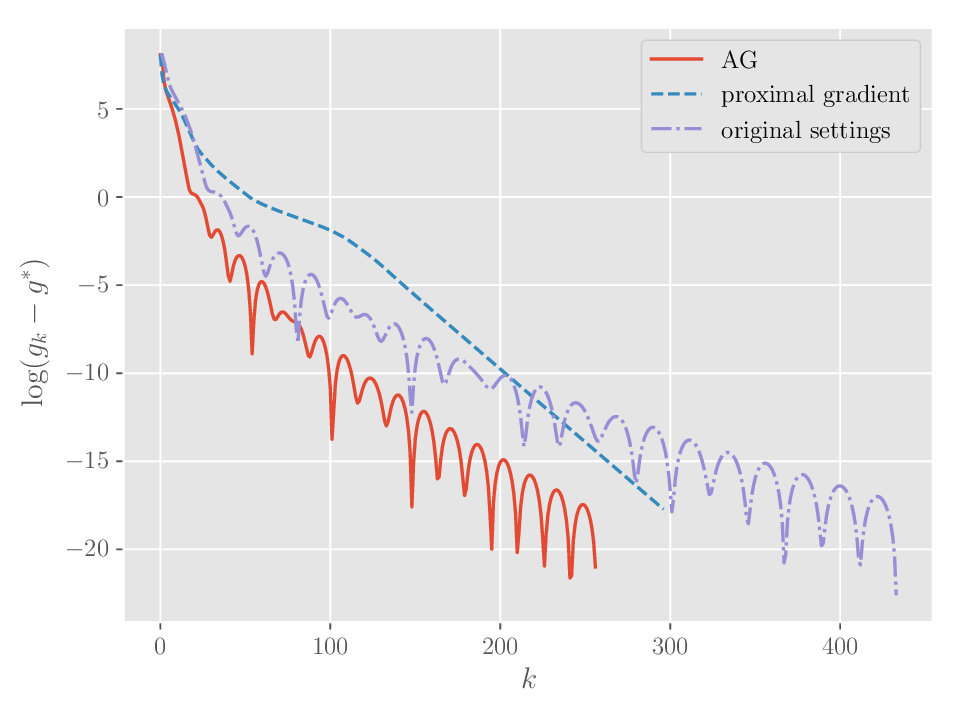}\caption{Convergence rate performance of first-order methods on SCAD (left) and MCP (right)
penalized linear model for a single simulation replicate. $k$ represents the number of iterations, $g_k$ represents the iterative objective function value, and $g^*$ represents the minimum found by the three methods considered. \label{fig:LM-iteration-plot}}
\end{figure}

Figure \ref{fig:LM-iteration-plot} shows the log differences of iterative objective values for a single replicate. This figure visualizes the accelerating effect of the AG method using our proposed hyperparameter settings. Median with the corresponding $95\%$ bootstrap CI of the number of iterations required for the iterative objective function values to make a fixed amount of descent for $100$ simulation replications are reported in Figures \ref{fig:sim-ag-lm-scad}, \ref{fig:sim-ag-lm-mcp} in Appendix \ref{sec:Simulations-LM}. The lack of bars in the reported barplots indicates that the median of $100$ replications breaks down; i.e., the corresponding proximal gradient algorithm fails to converge to the minimizer found by the three algorithms within $2000$ iterations. 
The AG method using our hyperparameter settings converges
much faster than proximal gradient and AG using the original hyperparameter settings
proposed by \citeauthor{Ghadimi2015} for both SCAD and MCP-penalized models discussed
here, as reflected in Figures \ref{fig:LM-iteration-plot}, \ref{fig:sim-ag-lm-scad}, \ref{fig:sim-ag-lm-mcp}. It can also be observed that momentum methods such
as AG are much less likely to be stuck at saddle points or local minimizers
than proximal gradient -- this property is consistent with previous findings~\parencite{Jin2017}. Since the proposed AG methods belong to the class
of momentum methods, the AG algorithms do not possess a descent property. As
suggested by a previous study~\parencite{Su2014}, oscillation will occur
at the end of the trajectory; the descent property will therefore
vanish. This is also reflected in Figures \ref{fig:LM-iteration-plot},
\ref{fig:logistic-iteration-plot} -- as the trajectory moves close
to the optimizer, the oscillation will start to occur for the AG methods. Among all the first-order methods, the AG method with our proposed hyperparameter settings tends to converge the fastest in all scenarios considered, as illustrated by Figures \ref{fig:sim-ag-lm-scad}, \ref{fig:sim-ag-lm-mcp} in Appendix \ref{sec:Simulations-LM}. The observed standard errors among $100$ simulation replications are rather small, suggesting that the halting time retains predictable for high-dimensional models, which agrees with the recent findings~\parencite{Paquette2020}. 

{ Figures \ref{fig:lm-time-scad}, \ref{fig:lm-time-mcp} report median with the corresponding $95\%$ bootstrap CI of the computing time (in seconds) required for the infinity norm of the two consecutive iterations $\left\Vert \boldsymbol{\beta}^{\left(k+1\right)}-\boldsymbol{\beta}^{\left(k\right)}\right\Vert _{\infty}$ to fall below $10^{-4}$ for $100$ simulation replications. It can be observed that the computing time for AG with suggested settings is much shorter than the computing time for coordinate descent.}

\begin{figure}[ht]
\begin{centering}
\includegraphics[scale=0.8]{./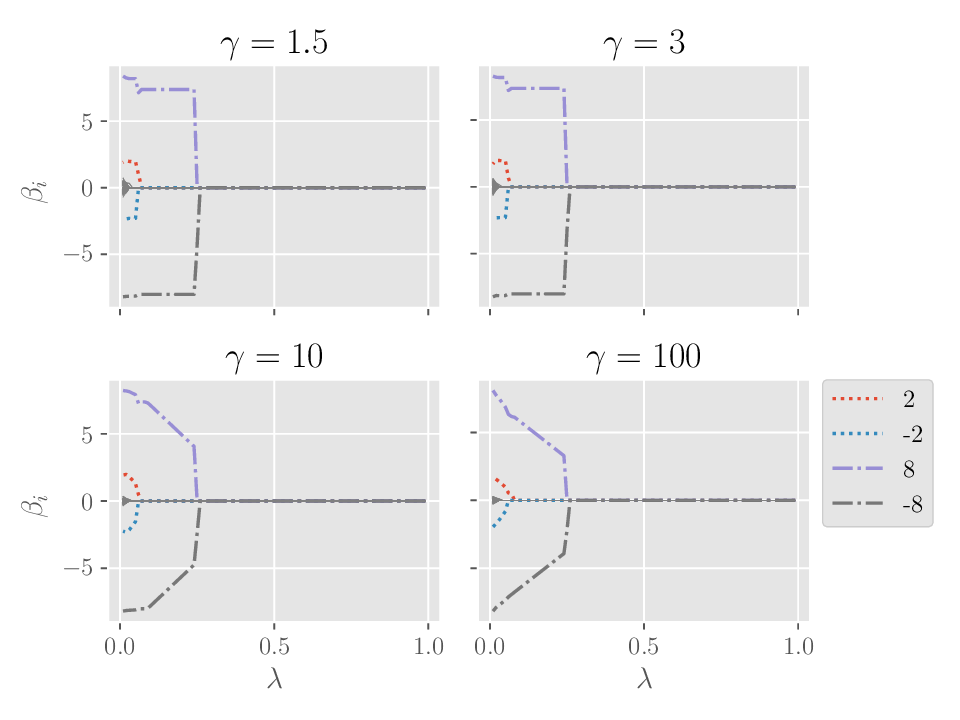}
\par\end{centering}
\caption{Solution paths obtained using the proposed AG method for MCP-penalized linear
model with different values of $\gamma$ for a single simulation replicate. The behaviors of the solution path match the expected from the MCP penalized problems. The solution path behaves similarly to hard-thresholding for a small $\gamma$. As $\gamma$ increases, the solution path will behave more similarly to soft-thresholding. \label{fig:linear-signal}}
\end{figure}

To visualize the signal recovery performance using our proposed method, Figure \ref{fig:linear-signal} plots the solution paths for
the MCP-penalized linear model with different values of $\gamma$. The grey lines in Figure \ref{fig:linear-signal} represent the recovered values for the noise variables. AG method performs very well when applied to signal recovery problems for nonconvex-penalized linear models. Figure \ref{fig:linear-signal} serves as an arbitrary instance that the recovered signals using our method exhibit the expected pattern with MCP -- as $\lambda$ decreases, the degree of penalization decreases, and more false-positive signals will be selected. The stable solution path for the recovered signals suggests that the algorithm does not converge to a point far away from the ``true'' coefficients.

\begin{figure}[ht]
\begin{centering}
\includegraphics[scale=0.8]{./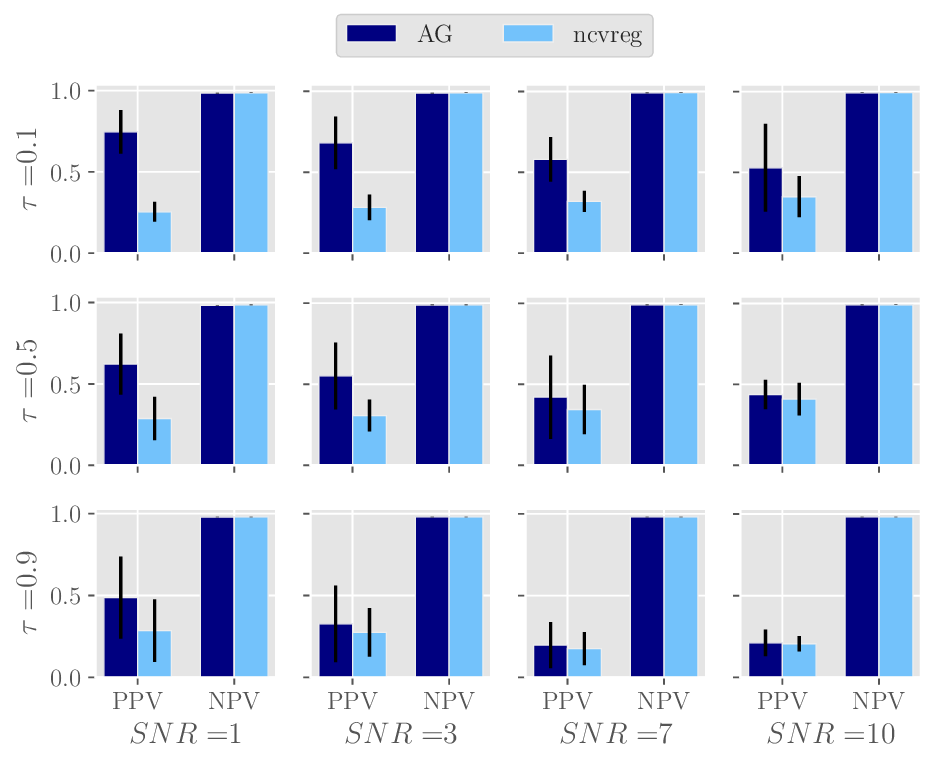}
\par\end{centering}
\caption{Sample means for Positive/Negative Predictive Values (PPV, NPV) of signal detection across different values of covariates correlation ($\tau$) and SNRs for AG with our proposed hyperparameter settings and {\tt ncvreg} on SCAD-penalized linear model over $100$ simulation replications. The error bars represent the standard errors.
\label{fig:LM-SCAD-pv}}
\end{figure}

To further illustrate the signal recovery performance, the means and standard errors for the scaled estimation error $\frac{\left\Vert \boldsymbol{\beta}_{\text{true}}-\hat{\boldsymbol{\beta}}\right\Vert _{2}^{2}}{\left\Vert \boldsymbol{\beta}_{\text{true}}\right\Vert _{2}^{2}}$, positive/negative predictive values (PPV, NPV), and active set cardinality across $100$ replications are reported in Tables \ref{tab:signal-lm-scad} and \ref{tab:signal-lm-mcp} in Appendix \ref{sec:Simulations-LM}. In what follows, $\mathcal{A}$ denotes the set of nonzero ``true'' coefficients and $\hat{\mathcal{A}}$ denotes the set of nonzero coefficients selected by the model. PPV and NPV use the following definitions: 
\begin{equation*}
\text{PPV}\coloneqq \frac{\lvert\mathcal{A}\cap\hat{\mathcal{A}}\rvert}{\lvert\hat{\mathcal{A}}\rvert},\ \text{NPV}\coloneqq \frac{\lvert\mathcal{A}^{C}\cap\hat{\mathcal{A}}^{C}\rvert}{\lvert\hat{\mathcal{A}}^{C}\rvert}.
\end{equation*}
Sample means and standard errors for PPV and NPV from Table \ref{tab:signal-lm-scad} are further visualized in Figure \ref{fig:LM-SCAD-pv}. When applied to sparse
learning problems, the signal recovery performance of our proposed
method often outperforms {\tt ncvreg}, the current state-of-the-art method~\parencite{Breheny2011}, particularly in terms of the positive predictive values (PPV). This can be observed from \figref{fig:LM-SCAD-pv} and Tables \ref{tab:signal-lm-scad}, \ref{tab:signal-lm-mcp} from Appendix \ref{sec:Simulations-LM}. This observation is especially evident when the signal-to-noise ratios are low. At the same time, $\ensuremath{\nicefrac{\left\Vert \boldsymbol{\beta}_{\text{true}}-\hat{\boldsymbol{\beta}}\right\Vert _{2}^{2}}{\left\Vert \boldsymbol{\beta}_{\text{true}}\right\Vert _{2}^{2}}}$ for both methods are close. As the SNR increases, the validation set becomes more similar to the training set, causing the chosen model to have a smaller $\lambda$. The model size will therefore increase, which will decrease the value of PPV.

\subsubsection{Penalized Logistic Regression} \label{sec:logistic-sim}

\begin{figure}[H]
\centering{}\includegraphics[scale=0.5]{./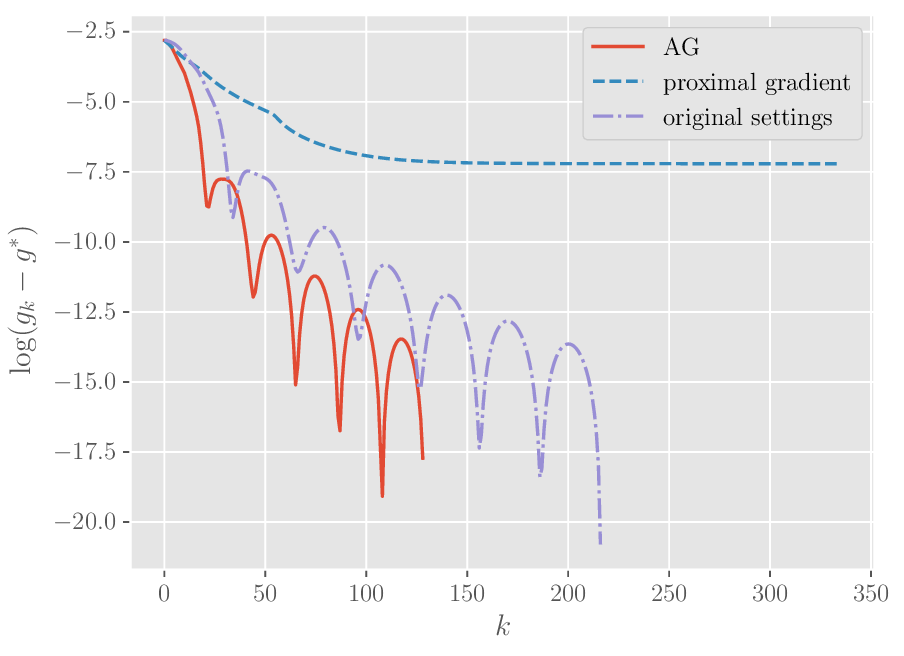}\includegraphics[scale=0.5]{./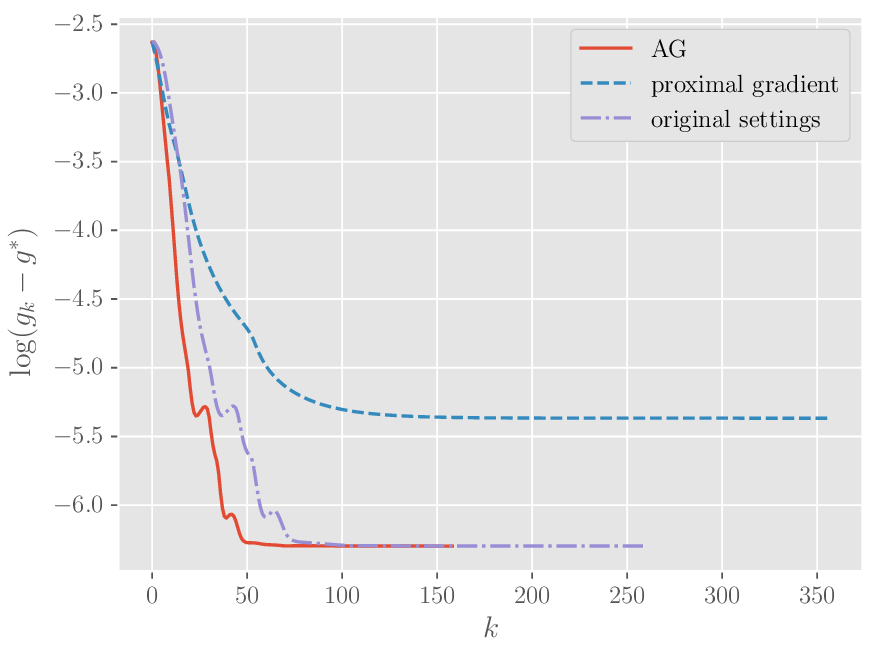}\caption{Convergence rate performance of first-order methods on SCAD (left) and MCP (right)
penalized logistic regression for a single simulation replicate. $k$ represents the number of iterations, $g_k$ represents the iterative objective function value, and $g^*$ represent the minimum found by the three methods considered. \label{fig:logistic-iteration-plot}}
\end{figure}

\begin{figure}[ht]
\begin{centering}
\includegraphics[scale=0.8]{./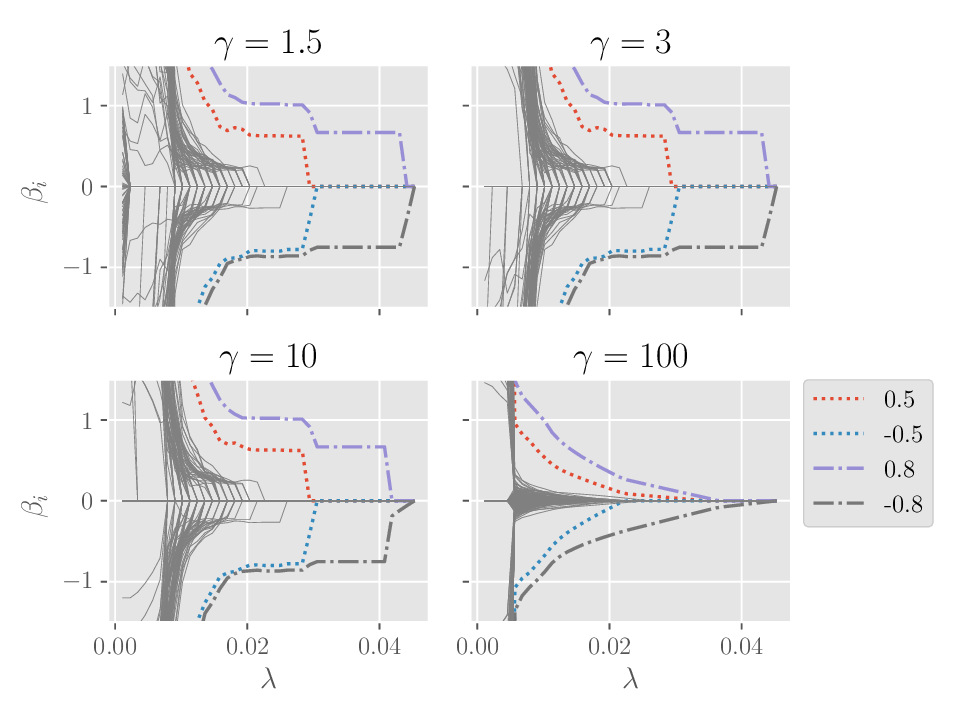}
\par\end{centering}
\caption{Solution paths obtained using the proposed AG method for MCP-penalized logistic
regression with different values of $\gamma$ for a single simulation replicate. The behaviors of the solution path match the expected from the MCP penalized problems. The solution path behaves similarly to hard-thresholding for a small $\gamma$. As $\gamma$ increases, the solution path will behave more similarly to soft-thresholding. \label{fig:logistic-signal}}
\end{figure}

\begin{figure}[ht]
\begin{centering}
\includegraphics[scale=0.8]{./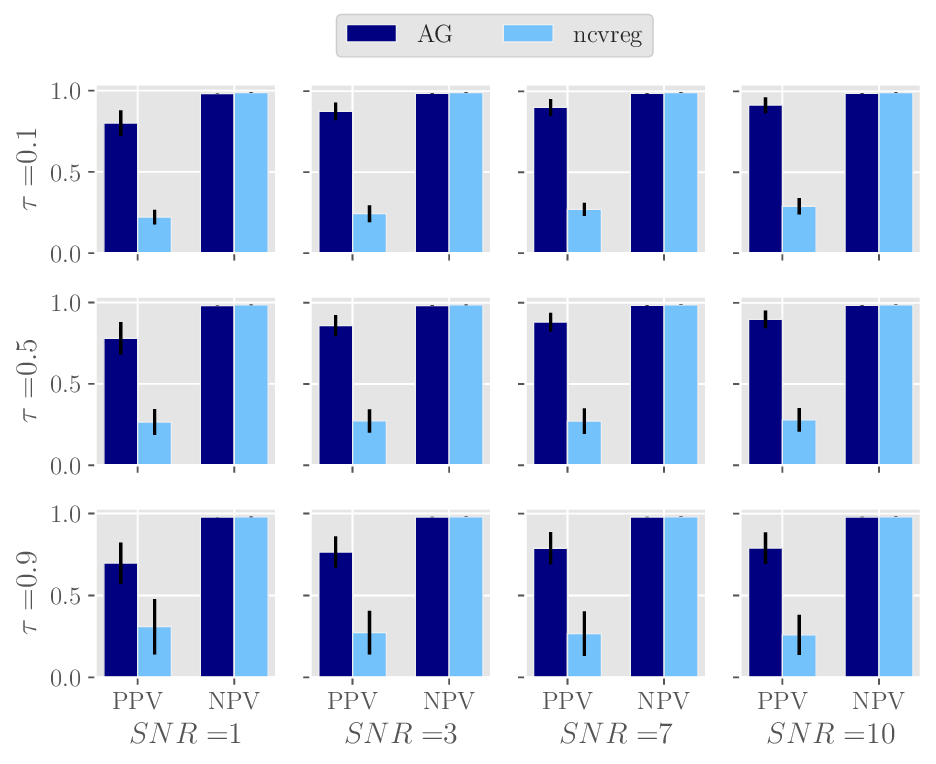}
\par\end{centering}
\caption{Sample means for Positive/Negative Predictive Values (PPV, NPV) of signal detection across different values of covariates correlation ($\tau$) and SNRs for AG with our proposed hyperparameter settings and {\tt ncvreg} on SCAD-penalized logistic model over $100$ simulation replications. The error bars represent the standard error.
\label{fig:logistic-SCAD-pv}}
\end{figure}

The simulation results reflected in Figures \ref{fig:logistic-iteration-plot}, \ref{fig:logistic-signal}, as well as Figures \ref{fig:sim-ag-logistic-scad}, \ref{fig:sim-ag-logistic-mcp} and Tables \ref{tab:signal-logistic-scad}, \ref{tab:signal-logistic-mcp} in Appendix \ref{sec:Simulations-logistic} suggest similar findings for penalized logistic models to our findings for penalized linear models as discussed in Section \ref{sec:linear-sim}. We further note that when applied to penalized logistic models, the coordinate descent method often fails to converge, resulting in overall poor performance in positive predictive values as reflected in Figure \ref{fig:logistic-SCAD-pv} and Tables \ref{tab:signal-logistic-scad}, \ref{tab:signal-logistic-mcp} in Appendix \ref{sec:Simulations-logistic}. When it does converge, the coordinate descent method does so at a very slow rate. In comparison, our proposed method has a convergence guarantee in theory and converges within a reasonable number of iterations in our simulation studies, as shown in Figures \ref{fig:sim-ag-lm-scad}, \ref{fig:sim-ag-lm-mcp} in Appendix \ref{sec:Simulations-logistic}. { In our computing time comparison, we used identical simulation setups and convergence standard for both the AG method and coordinate descent method, running both on a NVIDIA A100 GPU with CUDA compute capability of 8.0 from Compute Canada; the submitted simulation job finished well within $20$ minutes for both SCAD and MCP-penalized logistic models when using the AG method, but exceeded the 7-day computing time limit imposed on the Narval cluster when using the coordinate descent method.}

\section{Discussion}


We considered a recently developed generalization of Nesterov's accelerated gradient method for nonconvex optimization, and we have discussed its potential in sparse statistical learning with nonconvex penalties. An important issue concerning this algorithm is the selection of its sequences of hyperparameters. We present an explicit solution to this problem by minimizing the algorithm's complexity upper bound, hence accelerating convergence of the algorithm. 
Our simulation studies indicate that among first-order methods, the AG method using our proposed hyperparameter settings achieves a convergence rate considerably faster than other first-order methods such as the AG method using the original proposed hyperparameter settings or proximal gradient. Our simulations also show that signal recovery using our proposed method generally outperforms {\tt ncvreg}, the current state-of-the-art method. This performance gain is much more pronounced for penalized linear models when the signal-to-noise ratios are low. For penalized logistic regression, the performance gain observed is consistent across various covariates correlation and signal-to-noise ratio settings. Compared to coordinate-wise minimization methods, our proposed method is less challenged by low signal-to-noise ratios and is feasible to implement in parallel. { Given today's computing facilities, parallel computing is particularly meaningful for large datasets~\parencite{Parnell2020}. We also show this gain in parallel computing performance by comparing computing time on a GPU.} Furthermore, our proposed method has weaker convergence conditions and can be applied to a class of problems that do not have an explicit solution to the coordinate-wise objective function. For example, linear mixed models for grouped or longitudinal data involve the inverse of a large covariance matrix. Decomposition of this covariance matrix is necessary to apply the coordinate descent method. However, such decomposition can be computationally costly and numerically unstable~\parencite{Quarteroni2000}. On the other hand, matrix decomposition is not needed for first-order methods, as numerically stable yet computationally efficient approaches such as conjugate gradient can be adapted when applying our proposed method. The proposed nonconvex AG method can be applied to a wide range of statistical learning problems, opening various future research opportunities in statistical machine learning and statistical genetics. 

\section{Disclaimer}
All codes to reproduce the simulation results of this paper and outputs from Calcul Quebec/Compute Canada can be found on the following GitHub repository:

\href{https://github.com/Kaiyangshi-Ito/nonconvexAG}{https://github.com/Kaiyangshi-Ito/nonconvexAG}

\newpage

\appendix

\section{Proofs} \label{sec:proof}
We first establish the following Lemma needed for the proof of Theorem \ref{thm:convex}.
\subsection{Proof of Theorem \ref{thm:convex} }
\label{sec:proof-thm-1}

The following lemma is needed in the proof of Theorem 1. 
\begin{lem}
\label{lem:convergence-cond-meaning} Assume that $\forall k=1,2,\dots,N$,
the convergence conditions \eqref{eq:convcond1} and \eqref{eq:convcond2}
hold, then we have the following recursive relation: 
\begin{equation}
\alpha_{k+1}\leq\frac{1}{1+\frac{\delta_{k}/\delta_{k+1}}{\alpha_{k}}}\label{eq:bifurcation}.
\end{equation}
\end{lem}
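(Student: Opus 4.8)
The plan is to obtain \eqref{eq:bifurcation} directly from the monotonicity condition \eqref{eq:convcond2} by substituting in the recursive definition of $\Gamma_k$ and rearranging. No appeal to the geometry of the algorithm is needed; the statement is purely a consequence of the two displayed conditions together with the sign constraints imposed on the sequences in the input of Algorithm \ref{alg:Accelerated-Gradient-Algorithm}.

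First I would record the positivity bookkeeping. By the input requirements, $\delta_k>0$ for all $k$ and $0<\alpha_k<1$ for all $k\ge 2$ (with $\alpha_1=1$); from $\Gamma_1=1$ and $\Gamma_k=(1-\alpha_k)\Gamma_{k-1}$ one gets inductively that $\Gamma_k>0$ for every $k$, since each factor $1-\alpha_k$ with $k\ge 2$ lies in $(0,1)$. In particular $1-\alpha_{k+1}>0$, which is the only extra fact I will use besides \eqref{eq:convcond2}; note that \eqref{eq:convcond1} is not actually invoked beyond guaranteeing the step sizes are positive.

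Next I would extract two consecutive terms of the chain \eqref{eq:convcond2}, namely $\dfrac{\alpha_k}{\delta_k\Gamma_k}\ge\dfrac{\alpha_{k+1}}{\delta_{k+1}\Gamma_{k+1}}$, and replace $\Gamma_{k+1}$ by $(1-\alpha_{k+1})\Gamma_k$. Since $\Gamma_k>0$ it cancels, leaving
\[
\frac{\alpha_k}{\delta_k}\ge\frac{\alpha_{k+1}}{\delta_{k+1}\,(1-\alpha_{k+1})}.
\]
Clearing the positive denominators gives $\alpha_k\delta_{k+1}(1-\alpha_{k+1})\ge\alpha_{k+1}\delta_k$, i.e. $\alpha_k\delta_{k+1}\ge\alpha_{k+1}\bigl(\delta_k+\alpha_k\delta_{k+1}\bigr)$, and dividing by the positive quantity $\delta_k+\alpha_k\delta_{k+1}$ yields
\[
\alpha_{k+1}\le\frac{\alpha_k\delta_{k+1}}{\delta_k+\alpha_k\delta_{k+1}}=\frac{1}{1+\frac{\delta_k/\delta_{k+1}}{\alpha_k}},
\]
which is \eqref{eq:bifurcation}.

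There is no real obstacle: the argument is the substitution of the $\Gamma$-recursion into \eqref{eq:convcond2} followed by elementary manipulation. The only point demanding a moment's care is verifying that every expression one divides by is strictly positive — in particular $1-\alpha_{k+1}$, $\Gamma_k$, and $\delta_k+\alpha_k\delta_{k+1}$ — so that the direction of the inequality is preserved throughout; this is precisely what the positivity step at the start supplies.
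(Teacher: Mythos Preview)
Your proposal is correct and follows the same key step as the paper: substitute $\Gamma_{k+1}=(1-\alpha_{k+1})\Gamma_k$ into the pairwise inequality from \eqref{eq:convcond2}, cancel $\Gamma_k$, and rearrange. The paper's proof additionally brings in \eqref{eq:convcond1}, forms the min of the two resulting bounds, and optimizes over $\delta_{k+1}$ before simplifying back to \eqref{eq:bifurcation}; that detour is really preparation for Theorem~\ref{thm:convex} rather than a different proof of the lemma itself, so your direct route --- and your observation that \eqref{eq:convcond1} is not actually needed here --- is a cleaner derivation of the stated inequality.
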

\begin{proof}
The convergence conditions \eqref{eq:convcond1} and \eqref{eq:convcond2}
gives that $\forall k=1,2,\dots,N-1$, 
\[
\alpha_{k+1}\delta_{k+1}\leq\omega_{k+1}\Leftrightarrow\alpha_{k+1}\leq\frac{\omega_{k+1}}{\delta_{k+1}},\text{ and}
\]
\[
\frac{\alpha_{k}}{\delta_{k}\Gamma_{k}}\geq\frac{\alpha_{k+1}}{\delta_{k+1}\Gamma_{k+1}}\Leftrightarrow\frac{\alpha_{k}}{\delta_{k}}\geq\frac{\alpha_{k+1}}{\delta_{k+1}\left(1-\alpha_{k+1}\right)}\Leftrightarrow\alpha_{k+1}\leq\frac{\alpha_{k}\delta_{k+1}}{\alpha_{k}\delta_{k+1}+\delta_{k}}.
\]
Following above two inequalities, we have that 
\begin{equation}
\alpha_{k+1}\leq\min\left\{ \frac{\omega_{k+1}}{\delta_{k+1}},\frac{\alpha_{k}\delta_{k+1}}{\alpha_{k}\delta_{k+1}+\delta_{k}}\right\} \label{eq:alpha-upper-bound}.
\end{equation}
We observe that in \eqref{eq:alpha-upper-bound}, $\frac{\omega_{k+1}}{\delta_{k+1}}$
is monotonically decreasing with respect to $\delta_{k+1}$ on $\mathbb{R}_{+}$;
while $\frac{\alpha_{k}\delta_{k+1}}{\alpha_{k}\delta_{k+1}+\delta_{k}}$
is monotonically increasing with respect to $\delta_{k+1}$ on $\mathbb{R}_{+}$.
This suggests: 
\begin{equation}
\arg\max_{\delta_{k+1}>0}\left(\min\left\{ \frac{\omega_{k+1}}{\delta_{k+1}},\frac{\alpha_{k}\delta_{k+1}}{\alpha_{k}\delta_{k+1}+\delta_{k}}\right\} \right)=\left\{ \frac{\omega_{k+1}+\sqrt{\omega_{k+1}^{2}+\frac{4\omega_{k+1}\delta_{k}}{\alpha_{k}}}}{2}\right\} .\label{eq:get-lambda}
\end{equation}
That is, the inequality constraints conditions \eqref{eq:convcond1}
and \eqref{eq:convcond2} for convergence are merely a lower bound
on the\emph{ vanishing rate} of $\left\{ \alpha_{k}\right\} $. Therefore
it follows from \eqref{eq:convcond1} and the (necessary) optimality
condition for \eqref{eq:get-lambda} that 
\begin{equation}
\alpha_{k+1}\leq\frac{2\omega_{k+1}}{\omega_{k+1}+\sqrt{\omega_{k+1}^{2}+\frac{4\omega_{k+1}\delta_{k}}{\alpha_{k}}}}\leq\frac{2}{1+\sqrt{1+\frac{4\delta_{k}}{\alpha_{k}\omega_{k+1}}}}=\frac{2}{1+\sqrt{1+\frac{4\delta_{k}/\delta_{k+1}}{\alpha_{k}\alpha_{k+1}}}}.\label{eq:alpha-bound}
\end{equation}
By simplifying \eqref{eq:bifurcation}, we have: 
\[
\alpha_{k+1}\leq\frac{1}{1+\frac{\delta_{k}/\delta_{k+1}}{\alpha_{k}}}.
\]
\end{proof}

We now proceed with the proof of Theorem 1. 
\begin{proof}
The complexity upper bound \eqref{eq:complexity-bound} under the
given conditions can be simplified as: 
\begin{align}
 & \left[\sum_{k=1}^{N}\Gamma_{k}^{-1}\omega_{k}\left(1-L_{\Psi}\omega_{k}\right)\right]^{-1}\left[\frac{\left\Vert x_{0}-x^{*}\right\Vert ^{2}}{\delta_{1}}+\frac{2L_{f}}{\Gamma_{N}}\left(\left\Vert x^{*}\right\Vert ^{2}+M^{2}\right)\right]\nonumber \\
= & \left[\sum_{k=1}^{N}\Gamma_{k}^{-1}\omega_{k}\left(1-L_{\Psi}\omega_{k}\right)\right]^{-1}\cdot\frac{\left\Vert x_{0}-x^{*}\right\Vert ^{2}}{\delta_{1}}\nonumber \\
= & \frac{1}{\omega\left(1-L_{\Psi}\omega\right)}\left(\sum_{k=1}^{N}\Gamma_{k}^{-1}\right)^{-1}\cdot\frac{\left\Vert x_{0}-x^{*}\right\Vert ^{2}}{\omega}\nonumber \\
= & \left(\sum_{k=1}^{N}\Gamma_{k}^{-1}\right)^{-1}\cdot\frac{\left\Vert x_{0}-x^{*}\right\Vert ^{2}}{\omega^{2}\left(1-L_{\Psi}\omega\right)}\label{eq:convex-complex-bound}.
\end{align}
Observe that $\left(\sum_{k=1}^{N}\Gamma_{k}^{-1}\right)^{-1}$ is
monotonically decreasing with respect to $\alpha_{k}$ for all $k=1,2,\dots,N$.
This property implies that \eqref{eq:convex-complex-bound} is minimized when
$\alpha_{k}$ attains its greatest value for $k=1,2,\dots,N$. 

Condition $\delta_{1}=\omega_{k}=\omega$ gives that 
\[
\omega_{1}=\delta_{1}=\alpha_{1}\delta_{1}.
\]
Since the upper bound for $\alpha_{k+1}$ presented in \eqref{eq:bifurcation}
is monotonically increasing with respect to $\alpha_{k}$, it then
follows inductively from the (necessary) optimality condition of \eqref{eq:alpha-upper-bound}
that 
\[
\alpha_{k+1}\leq\frac{1}{1+\frac{\delta_{k}/\delta_{k+1}}{\alpha_{k}}}=\frac{1}{1+\frac{\alpha_{k+1}}{\alpha_{k}^{2}}},
\]
which simplifies to 
\[
\alpha_{k+1}\leq\frac{2}{1+\sqrt{1+\frac{4}{\alpha_{k}^{2}}}}.
\]
While $\omega^{2}\left(1-L_{\Psi}\omega\right)$ should be maximized
to minimize the value of \eqref{eq:convex-complex-bound}, which implies
the minimizer for $\omega$ is 
\[
\bar{\omega}=\frac{2}{3L_{\Psi}}.
\]
And $\bar{\lambda}_{k+1}=\frac{\bar{\omega}}{\bar{\alpha}_{k+1}}$
follows directly form the necessary optimality condition for \eqref{eq:alpha-upper-bound}. It is trivial to check that $\left(\left\{ \bar{\alpha}_{k}\right\} ,\left\{ \bar{\delta}_{k}\right\} ,\bar{\omega}\right)$ is feasible under given constraints \eqref{eq:convcond1} and \eqref{eq:convcond2}.
\end{proof}

\subsection{Proof of Theorem \ref{thm:1overk}}
\label{sec:proof-thm-2}
\begin{proof}
Consider arbitrary $k=2,\dots,N$, then $\alpha_{k}\in\left(0,1\right)$
by definition. In the convergence conditions \eqref{eq:convcond1}
and \eqref{eq:convcond2}, this gives us that 
\[
\frac{\alpha_{k+1}}{\alpha_{k}}\leq\frac{2}{\alpha_{k}+\sqrt{\alpha_{k}^{2}+4}}\in\left(\frac{\sqrt{5}-1}{2},1\right).
\]
Thus, $\left\{ \alpha_{k}\right\} $ is a bounded monotonically decreasing
sequence, and $\alpha_{2}\leq\frac{2}{1+\sqrt{1+\frac{4}{1^{2}}}}=\frac{\sqrt{5}-1}{2}$
further implies that $\forall k\geq2,\ \alpha_{k}\in(0,\frac{\sqrt{5}-1}{2}]$. 

For all $k\geq2$, $\alpha_{k}\in\left(0,1\right)$ implies that $1-\alpha_{k}\in\left(0,1\right)$.
Therefore, $\Gamma_{k}^{-1}=\frac{1}{\left(1-\alpha_{2}\right)\left(1-\alpha_{3}\right)\cdots\left(1-\alpha_{k}\right)}$
is monotonically increasing with respect to $k$. Thus, $\sum_{k=1}^{N}\Gamma_{k}^{-1}=O\left(N\right)$,
which implies that $\left(\sum_{k=1}^{N}\Gamma_{k}^{-1}\right)^{-1}\cdot C_{1}=O\left(1/N\right)$. 

Observe that 
\begin{align}
 & 0<\left(\Gamma_{N}\sum_{k=1}^{N}\frac{1}{\Gamma_{k}}\right)^{-1}=\frac{1}{N\cdot\Gamma_{N}}\cdot\frac{N}{\sum_{k=1}^{N}\frac{1}{\Gamma_{k}}}\nonumber \\
 & \leq\frac{1}{N\cdot\Gamma_{N}}\cdot\left(\prod_{k=1}^{N}\Gamma_{k}\right)^{\frac{1}{N}}=\frac{1}{N}\cdot\left(\prod_{k=1}^{N}\frac{\Gamma_{k}}{\Gamma_{N}}\right)^{\frac{1}{N}}\label{eq:HM-GM}\\
 & =\frac{1}{N}\cdot\left(\prod_{k=1}^{N}\frac{\Gamma_{N}}{\Gamma_{k}}\right)^{-\frac{1}{N}}=\frac{1}{N}\cdot\left(\prod_{k=2}^{N}\left(1-\alpha_{k}\right)^{k}\right)^{-\frac{1}{N}}\nonumber \\
 & =\frac{1}{N}\cdot\prod_{k=2}^{N}\left(1-\alpha_{k}\right)^{-\frac{k}{N}},\nonumber 
\end{align}
where the inequality in \eqref{eq:HM-GM} follows from the harmonic
mean-geometric mean inequality. 

Consider arbitrary $N\in\mathbb{N}$, now we are to prove that $\forall k=1,2,\dots,N,\ \alpha_{k}\leq\frac{2}{k+1}$.
By definition, $\alpha_{1}=1\leq1$. Assume that $\alpha_{k}\leq\frac{2}{k+1}$,
then by the convergence conditions, 
\begin{align*}
\alpha_{k+1}\leq & \frac{2}{1+\sqrt{1+\frac{4}{\alpha_{k}^{2}}}}\\
\leq & \frac{2}{1+\sqrt{1+4/\left(\frac{2}{k+1}\right)^{2}}}\\
= & \frac{2}{1+\sqrt{2+2k+k^{2}}}\\
< & \frac{2}{k+2}.
\end{align*}
Thus, by mathematical induction, $\forall k=1,2,\dots,N,\ \alpha_{k}\leq\frac{2}{k+1}$.
Hence, $\sum_{k=1}^{N}\frac{k}{N}\alpha_{k}<\sum_{k=1}^{N}\frac{k}{N}\cdot\frac{2}{k}=\sum_{k=1}^{N}\frac{2}{N}=2<\infty$
as $N\rightarrow\infty$. 

Furthermore, we have that $\forall x\in(0,\frac{\sqrt{5}-1}{2}],\ -\log\left(1-x\right)<x$.
Combined with the fact that $\forall k\geq2,\ \alpha_{k}\in(0,\frac{\sqrt{5}-1}{2}]$,
we have that $\forall k\geq2,\ -\log\left(1-\alpha_{k}\right)<\alpha_{k}$.
Thus, 
\[
\log\left(\prod_{k=2}^{N}\left(1-\alpha_{k}\right)^{-\frac{k}{N}}\right)=-\sum_{k=2}^{N}\frac{k}{N}\log\left(1-\alpha_{k}\right)<\sum_{k=2}^{N}\frac{k}{N}\alpha_{k}\leq2<\infty.
\]

Therefore, $\prod_{k=2}^{N}\left(1-\alpha_{k}\right)^{-\frac{k}{N}}$
is also upper bounded as $N\rightarrow\infty$, which implies that
\[
\left(\sum_{k=1}^{N}\frac{\Gamma_{N}}{\Gamma_{k}}\right)^{-1}\leq\frac{1}{N}\cdot\prod_{k=2}^{N}\left(1-\alpha_{k}\right)^{-\frac{k}{N}}=O\left(1/N\right).
\]
Hence, $\left(\sum_{k=1}^{N}\frac{\Gamma_{N}}{\Gamma_{k}}\right)^{-1}\cdot C_{2}=O\left(1/N\right)$.
Therefore, $\left(\sum_{k=1}^{N}\Gamma_{k}^{-1}\right)^{-1}\cdot C_{1}+\left(\sum_{k=1}^{N}\frac{\Gamma_{N}}{\Gamma_{k}}\right)^{-1}\cdot C_{2}=O\left(1/N\right)$. 
\end{proof}

\subsection{Proof of Theorem \ref{thm:alpha-k-vanishing}}
\label{sec:proof-thm-3}
\begin{proof}
$\bar{\alpha}_{k}\leq\frac{2}{k+1}$ for $k=1,2,\dots,N$ has already been proved
in the proof of Theorem \ref{thm:1overk}. For the left inequality,
note that $\bar{\alpha}_{1}=1\geq\frac{2}{2+a}$ for $a>0$; for $k\geq2$,
we are to prove a stronger inequality: 
\begin{equation}
\bar{\alpha}_{k}\geq\frac{2}{\sqrt{\left(1+a\cdot k^{-b}\right)k\left[\left(1+a\cdot k^{-b}\right)k+2\right]}}\label{eq:alpha-close-lower}.
\end{equation}
For $k=2$, condition \eqref{eq:k-vanishing-cond} implies that
\begin{equation}
\label{eq:k-vanising-form2}
a\cdot2^{-b}\geq\frac{1}{\left(1-b\right)\left(4-b\right)}>\frac{1}{4}>\sqrt{5}-2 \text{ for }0<b<1,
\end{equation}
which suggests $\bar{\alpha}_{2}=\frac{2}{1+\sqrt{5}}\geq\frac{2}{\sqrt{\left(1+a\cdot2^{-b}\right)\cdot2\left[\left(1+a\cdot2^{-b}\right)\cdot2+2\right]}}$
by simple algebra. Assume \eqref{eq:alpha-close-lower} holds for
$k=t$, then 
\begin{align}
\bar{\alpha}_{t+1}= & \frac{2}{1+\sqrt{1+\frac{4}{\bar{\alpha}_{t}^{2}}}}\nonumber \\
\geq & \frac{2}{1+\sqrt{1+4/\left(2/\sqrt{\left(1+a\cdot t^{-b}\right)t\left[\left(1+a\cdot t^{-b}\right)t+2\right]}\right)^{2}}}\nonumber \\
= & \frac{2}{1+\sqrt{1+\left(1+a\cdot t^{-b}\right)t\left[\left(1+a\cdot t^{-b}\right)t+2\right]}}\nonumber \\
= & \frac{2}{\left(1+a\cdot t^{-b}\right)t+2}\nonumber \\
\geq & \frac{2}{\sqrt{\left(1+a\cdot\left(t+1\right)^{-b}\right)\left(t+1\right)\left[\left(1+a\cdot\left(t+1\right)^{-b}\right)\left(t+1\right)+2\right]}}\label{eq:many-terms};
\end{align}
and \eqref{eq:many-terms} follows from 
\begin{align}
 & \left(1+a\cdot\left(t+1\right)^{-b}\right)\left(t+1\right)\left[\left(1+a\cdot\left(t+1\right)^{-b}\right)\left(t+1\right)+2\right]-\left[\left(1+a\cdot t^{-b}\right)t+2\right]^{2}\nonumber \\
= & a^{2}\left[\left(t+1\right)^{2-2b}-t^{2-2b}\right]+2at\left[\left(t+1\right)^{1-b}-t^{1-b}\right]+4a\left[\left(t+1\right)^{1-b}-t^{1-b}\right]-1\nonumber \\
\geq & 2at\left[\left(t+1\right)^{1-b}-t^{1-b}\right]-1\nonumber \\
= & 2at^{2-b}\left[\left(1+\frac{1}{t}\right)^{1-b}-1\right]-1\nonumber \\
\geq & 2at^{2-b}\left[1+\left(1+b\right)t^{-1}-\frac{1}{2}b\left(1-b\right)t^{-2}-1\right]-1\label{eq:binom-approx}\\
= & 2a\left(1-b\right)t^{1-b}-ab\left(1-b\right)t^{-b}-1\geq0\label{eq:cond-2-works}.
\end{align}
\eqref{eq:binom-approx} follows from binomial approximation
inequality; $a>0$ and $0<b<1$ suggest that $2a\left(1-b\right)k^{1-b}-ab\left(1-b\right)k^{-b}-1$
is monotonically increasing with respect to $k$ for $k>0$, condition
\eqref{eq:k-vanishing-cond} therefore implies that $2a\left(1-b\right)k^{1-b}-ab\left(1-b\right)k^{-b}-1\geq0$
for all $k\geq2$, which is \eqref{eq:cond-2-works}. 

And proof of the left inequality for $k\geq2$ proceeds as the following:
\begin{align*}
\bar{\alpha}_{k}\geq & \frac{2}{\sqrt{\left(1+a\cdot k^{-b}\right)k\left[\left(1+a\cdot k^{-b}\right)k+2\right]}}\\
> & \frac{2}{\sqrt{\left(1+a\cdot k^{-b}\right)k\left[\left(1+a\cdot k^{-b}\right)k+2\right]+1}}\\
= & \frac{2}{\left(1+a\cdot k^{-b}\right)k+1}.
\end{align*}
\end{proof}

\subsection{Proof of Corollary \ref{cor:alpha-vanishing}}
\label{sec:proof-cor-1}
\begin{proof}
Observe that the lower bound of \eqref{eq:damping-bounds} is monotonically decreasing with respect to $a$ under given conditions. Constraint \eqref{eq:k-vanishing-cond} implies \eqref{eq:k-vanising-form2}, which further suggests that $$a\geq\frac{2^{b}}{\left(1-b\right)\left(4-b\right)}>0\text{ for }0<b<1;$$
i.e., $\bar{a}_k=\frac{\left(2/k\right)^{\bar{b}_k}}{\left(1-\bar{b}_k\right)\left(4-\bar{b}_k\right)}$. Thus, maximizing the lower bound of \eqref{eq:damping-bounds} is equivalent to minimize the convex function $\log\frac{\left(2/k\right)^{b}}{\left(1-b\right)\left(4-b\right)}$ with respect to $b$ over a open set $\left(0,1\right)$. First-order sufficient optimality condition gives the unique optimizer 
\[
\bar{b}_k=\frac{2+5\left(\log\frac{2}{k}\right)+\sqrt{9\left(\log\frac{2}{k}\right)^{2}+4}}{2\left(\log\frac{2}{k}\right)}\in\left(0,1\right)
\]
for $k\geq 8$. 
Simple algebra shows that $\lim_{k\rightarrow\infty}\frac{\bar{a}_kk^{1-\bar{b}_k}}{\log k}=\frac{2}{3}e$. Thus, the lower bound in Theorem \ref{thm:alpha-k-vanishing} becomes $\frac{k+1}{2}-\bar{\alpha}_{k}^{-1}=O\left(\log k\right)$.
\end{proof}

\section{Further Simulations}
\subsection{Penalized Linear Model \label{sec:Simulations-LM} }

In \figref{fig:sim-ag-lm-scad} and \ref{fig:sim-ag-lm-mcp},
the red bar represents AG using our proposed hyperparameter settings,
blue bar represents proximal gradient, and the purple bar represents AG using
the original hyperparameter settings~\parencite{Ghadimi2015}. It is evident
that for penalized linear models, AG using our hyperparameter settings
outperforms proximal gradient or AG using the original proposed hyperparameter settings considerably.

\begin{figure}[H]
\begin{centering}
\includegraphics[scale=1]{./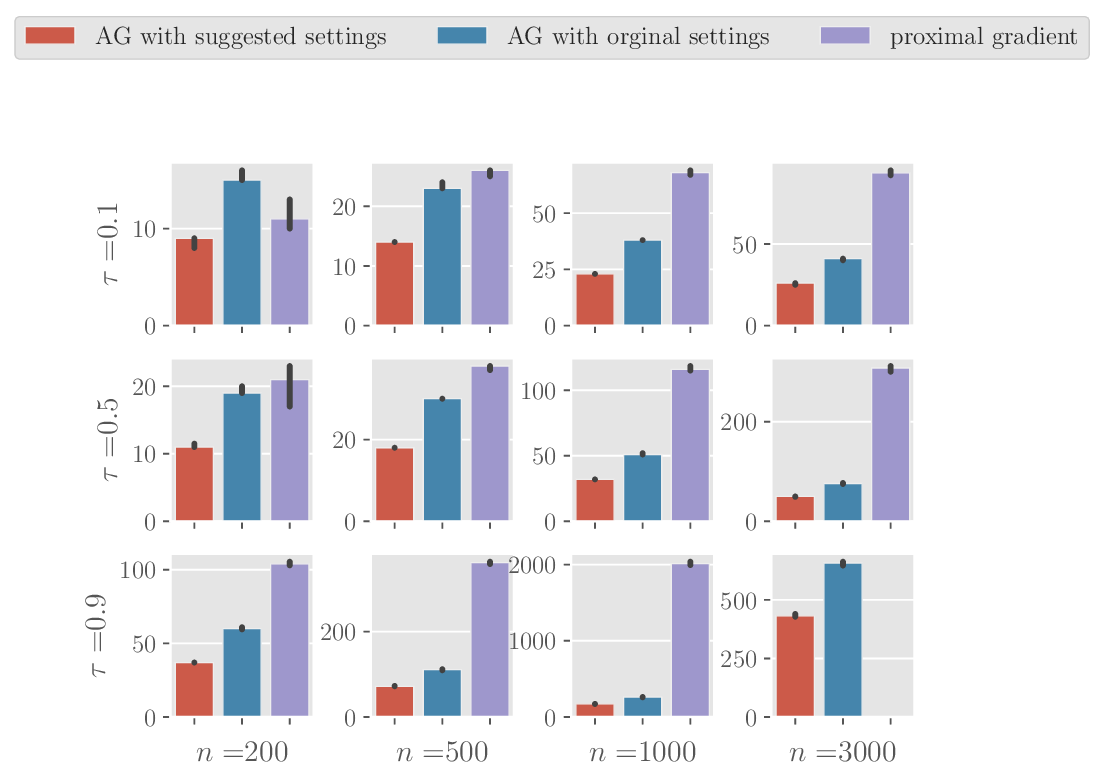}
\par\end{centering}
\caption{Median for the number of iterations required for the iterative objective value to reach $g^{*}+e^{3}$
on SCAD-penalized linear model for AG with our proposed hyperparameter
settings, AG with original settings, and proximal gradient over $100$ simulation replications, across varying covariates correlation ($\tau$) and $q/n$ values. The error bars represent the $95\%$ CIs from $1000$ bootstrap replications, $g^*$ represents the minimum per iterate found by the three methods considered.
\label{fig:sim-ag-lm-scad}}
\end{figure}

\begin{figure}[H]
\begin{centering}
\includegraphics[scale=1]{./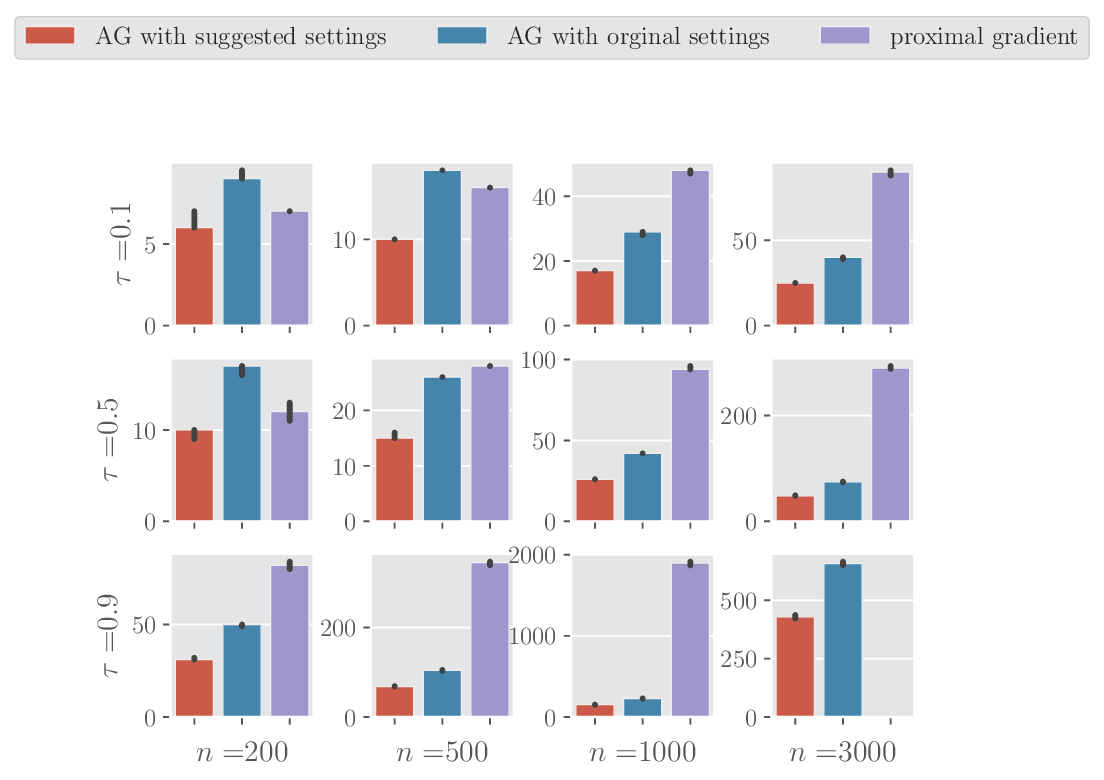}
\par\end{centering}
\caption{Median for the number of iterations required for iterative objective values to reach $g^{*}+e^{3}$
on MCP-penalized linear model for AG with our proposed hyperparameter settings, AG with original settings, and proximal gradient over $100$ simulation replications, across varying covariates correlation ($\tau$) and $q/n$ values. The error bars represent the $95\%$ CIs from $1000$ bootstrap replications, $g^*$ represents the minimum per iterate found by the three methods considered. \label{fig:sim-ag-lm-mcp}}
\end{figure}

\newpage
{ In \figref{fig:lm-time-scad} and \ref{fig:lm-time-mcp},
the red bar represents AG using our proposed hyperparameter settings,
blue bar represents proximal gradient, and the purple bar represents coordinate descent. It is evident
that for penalized linear models, AG using our hyperparameter settings
outperforms coordinate descent significantly in terms of computing time.}

\begin{figure}[H]
\begin{centering}
\includegraphics[scale=1]{./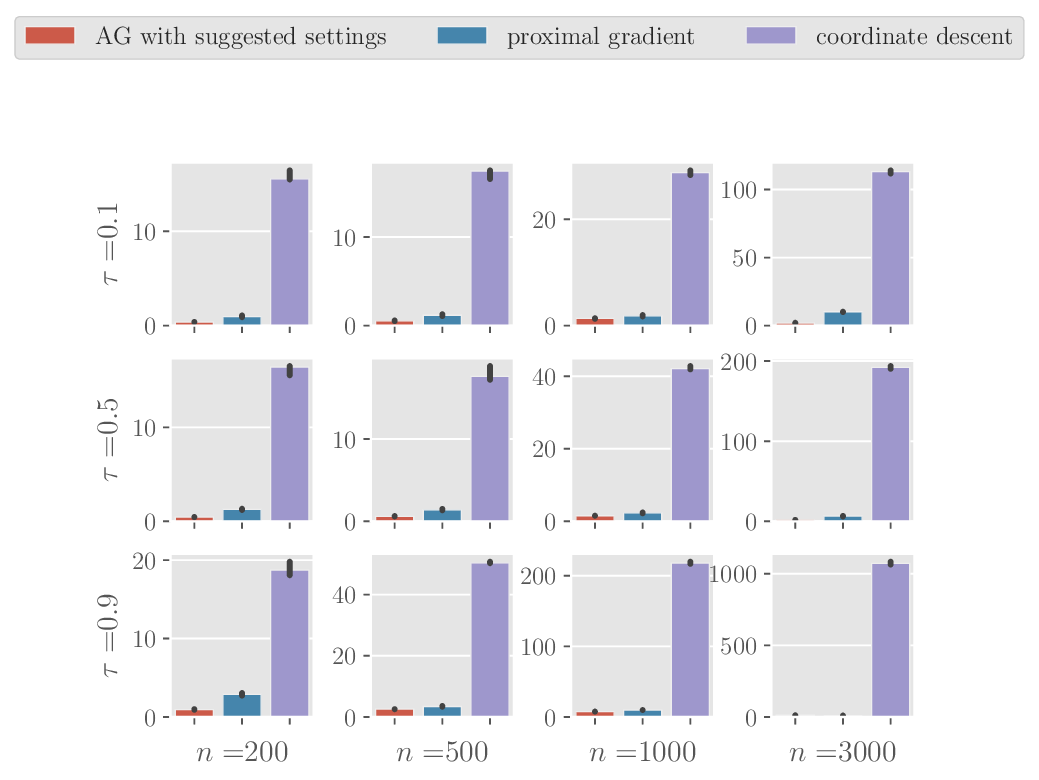}
\par\end{centering}
\caption{ Median for the computing time (in seconds) required for $\left\Vert \boldsymbol{\beta}^{\left(k+1\right)}-\boldsymbol{\beta}^{\left(k\right)}\right\Vert _{\infty}$ to fall below $10^{-4}$ on SCAD-penalized linear model for AG with our proposed hyperparameter
settings, proximal gradient, and coordinate descent over $100$ simulation replications, across varying covariates correlation ($\tau$) and $q/n$ values. The error bars represent the $95\%$ CIs from $1000$ bootstrap replications, $g^*$ represents the minimum per iterate found by the three methods considered.
\label{fig:lm-time-scad}}
\end{figure}

\begin{figure}[H]
\begin{centering}
\includegraphics[scale=1]{./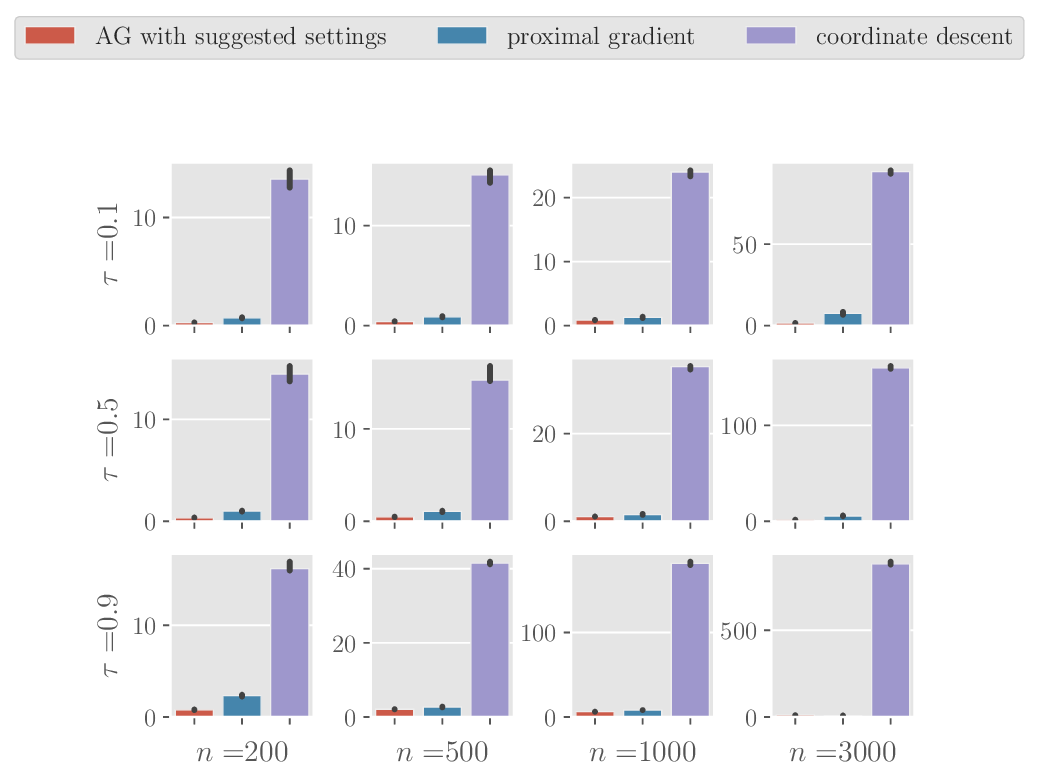}
\par\end{centering}
\caption{ Median for the computing time (in seconds) required for $\left\Vert \boldsymbol{\beta}^{\left(k+1\right)}-\boldsymbol{\beta}^{\left(k\right)}\right\Vert _{\infty}$ to fall below $10^{-4}$ on MCP-penalized linear model for AG with our proposed hyperparameter
settings, proximal gradient, and coordinate descent over $100$ simulation replications, across varying covariates correlation ($\tau$) and $q/n$ values. The error bars represent the $95\%$ CIs from $1000$ bootstrap replications, $g^*$ represents the minimum per iterate found by the three methods considered.
\label{fig:lm-time-mcp}}
\end{figure}

\newpage

\begin{table}[H]
\begin{center}

\caption{Signal recovery performance (sample mean and standard error of $\nicefrac{\left\Vert \boldsymbol{\beta}_{\text{true}}-\hat{\boldsymbol{\beta}}\right\Vert _{2}^{2}}{\left\Vert \boldsymbol{\beta}_{\text{true}}\right\Vert _{2}^{2}}$, Positive/Negative Predictive Values (PPV, NPV) for signal detection, and active set cardinality $\lvert\hat{\mathcal{A}}\rvert$) for {\tt ncvreg} and AG with our proposed hyperparameter settings on SCAD-penalized linear model over $100$ simulation replications, across varying values of SNRs and covariates correlations ($\tau$). \label{tab:signal-lm-scad}}
\begin{tabular}{c|ccc}
\hline 
\multicolumn{1}{c|}{$\nicefrac{\left\Vert \boldsymbol{\beta}_{\text{true}}-\hat{\boldsymbol{\beta}}\right\Vert _{2}^{2}}{\left\Vert \boldsymbol{\beta}_{\text{true}}\right\Vert _{2}^{2}}$} & $\tau=0.1$ & $0.5$ & $0.9$\\
\hline 
$\text{SNR}=1$, AG &  $0.128 (0.021)$ &  $0.521 (0.114)$ &  $2.839 (0.497)$ \\
$\text{SNR}=1$, \texttt{ncvreg} &   $0.131 (0.02)$ &  $0.485 (0.102)$ &  $2.929 (0.525)$ \\
$\text{SNR}=3$, AG &   $0.05 (0.009)$ &  $0.156 (0.035)$ &  $2.075 (0.339)$ \\
$\text{SNR}=3$, \texttt{ncvreg} &  $0.052 (0.009)$ &  $0.156 (0.028)$ &  $2.087 (0.357)$ \\
$\text{SNR}=7$, AG &  $0.022 (0.004)$ &  $0.085 (0.014)$ &  $1.278 (0.262)$ \\
$\text{SNR}=7$, \texttt{ncvreg} &  $0.021 (0.004)$ &  $0.083 (0.015)$ &    $1.3 (0.262)$ \\
$\text{SNR}=10$, AG &  $0.016 (0.003)$ &  $0.065 (0.011)$ &  $1.163 (0.207)$ \\
$\text{SNR}=10$, \texttt{ncvreg} &  $0.015 (0.003)$ &  $0.063 (0.013)$ &   $1.167 (0.22)$ \\
\hline 
\hline 
\multicolumn{1}{c|}{PPV} & $\tau=0.1$ & $0.5$ & $0.9$\\
\hline 
$\text{SNR}=1$, AG &  $0.747 (0.134)$ &  $0.622 (0.188)$ &   $0.488 (0.25)$ \\
$\text{SNR}=1$, \texttt{ncvreg} &  $0.255 (0.061)$ &  $0.287 (0.132)$ &   $0.286 (0.19)$ \\
$\text{SNR}=3$, AG &  $0.681 (0.162)$ &  $0.551 (0.206)$ &  $0.327 (0.234)$ \\
$\text{SNR}=3$, \texttt{ncvreg} &  $0.282 (0.079)$ &  $0.307 (0.098)$ &  $0.275 (0.148)$ \\
$\text{SNR}=7$, AG &   $0.58 (0.138)$ &   $0.42 (0.257)$ &  $0.197 (0.141)$ \\
$\text{SNR}=7$, \texttt{ncvreg} &   $0.32 (0.065)$ &  $0.344 (0.152)$ &  $0.175 (0.101)$ \\
$\text{SNR}=10$, AG &  $0.528 (0.272)$ &   $0.437 (0.09)$ &  $0.211 (0.081)$ \\
$\text{SNR}=10$, \texttt{ncvreg} &  $0.349 (0.127)$ &    $0.409 (0.1)$ &  $0.206 (0.047)$ \\
\hline 
\hline 
\multicolumn{1}{c|}{NPV} & $\tau=0.1$ & $0.5$ & $0.9$\\
\hline 
$\text{SNR}=1$, AG &  $0.984 (0.001)$ &  $0.984 (0.001)$ &  $0.979 (0.001)$ \\
$\text{SNR}=1$, \texttt{ncvreg} &  $0.987 (0.001)$ &  $0.986 (0.001)$ &   $0.98 (0.001)$ \\
$\text{SNR}=3$, AG &  $0.989 (0.001)$ &  $0.988 (0.002)$ &   $0.98 (0.001)$ \\
$\text{SNR}=3$, \texttt{ncvreg} &   $0.99 (0.001)$ &  $0.989 (0.001)$ &   $0.98 (0.001)$ \\
$\text{SNR}=7$, AG &  $0.992 (0.001)$ &  $0.991 (0.001)$ &  $0.981 (0.001)$ \\
$\text{SNR}=7$, \texttt{ncvreg} &  $0.993 (0.001)$ &  $0.991 (0.001)$ &  $0.981 (0.001)$ \\
$\text{SNR}=10$, AG &  $0.993 (0.001)$ &  $0.992 (0.001)$ &  $0.982 (0.001)$ \\
$\text{SNR}=10$, \texttt{ncvreg} &  $0.993 (0.001)$ &  $0.992 (0.001)$ &  $0.982 (0.001)$ \\
\hline 
\hline 
\multicolumn{1}{c|}{$\lvert\hat{\mathcal{A}}\rvert$} & $\tau=0.1$ & $0.5$ & $0.9$\\
\hline 
$\text{SNR}=1$, AG &     $25.82 (8.08)$ &   $31.58 (17.056)$ &  $23.11 (15.166)$ \\
$\text{SNR}=1$, \texttt{ncvreg} &  $100.88 (25.582)$ &   $94.32 (41.572)$ &  $42.01 (20.592)$ \\
$\text{SNR}=3$, AG &   $42.78 (14.003)$ &   $55.48 (20.653)$ &  $42.83 (16.308)$ \\
$\text{SNR}=3$, \texttt{ncvreg} &  $120.17 (33.554)$ &  $101.75 (29.498)$ &  $46.72 (16.252)$ \\
$\text{SNR}=7$, AG &   $61.89 (21.881)$ &   $97.88 (36.736)$ &  $86.71 (26.567)$ \\
$\text{SNR}=7$, \texttt{ncvreg} &   $115.4 (23.845)$ &  $107.19 (31.445)$ &    $89.74 (23.1)$ \\
$\text{SNR}=10$, AG &  $101.21 (66.968)$ &   $81.17 (25.325)$ &   $70.8 (11.642)$ \\
$\text{SNR}=10$, \texttt{ncvreg} &   $123.5 (52.077)$ &   $90.58 (40.419)$ &  $71.47 (10.954)$ \\
\hline 
\end{tabular}

\end{center}
\end{table}

\newpage

\begin{table}[H]
\begin{center}

\caption{Signal recovery performance (sample mean and standard error of $\nicefrac{\left\Vert \boldsymbol{\beta}_{\text{true}}-\hat{\boldsymbol{\beta}}\right\Vert _{2}^{2}}{\left\Vert \boldsymbol{\beta}_{\text{true}}\right\Vert _{2}^{2}}$, Positive/Negative Predictive Values (PPV, NPV), and active set cardinality $\lvert\hat{\mathcal{A}}\rvert$ for signal detection) for {\tt ncvreg} and AG with our proposed hyperparameter settings on MCP-penalized linear model over $100$ simulation replications, across varying values of SNRs and covariates correlations ($\tau$). \label{tab:signal-lm-mcp}}
\begin{tabular}{c|ccc}
\hline 
\multicolumn{1}{c|}{$\nicefrac{\left\Vert \boldsymbol{\beta}_{\text{true}}-\hat{\boldsymbol{\beta}}\right\Vert _{2}^{2}}{\left\Vert \boldsymbol{\beta}_{\text{true}}\right\Vert _{2}^{2}}$} & $\tau=0.1$ & $0.5$ & $0.9$\\
\hline 
$\text{SNR}=1$, AG &  $0.133 (0.022)$ &  $0.563 (0.124)$ &   $2.839 (0.39)$ \\
$\text{SNR}=1$, \texttt{ncvreg} &  $0.126 (0.019)$ &  $0.494 (0.112)$ &   $2.86 (0.427)$ \\
$\text{SNR}=3$, AG &   $0.049 (0.01)$ &  $0.169 (0.034)$ &  $1.997 (0.329)$ \\
$\text{SNR}=3$, \texttt{ncvreg} &  $0.048 (0.009)$ &  $0.161 (0.032)$ &    $1.92 (0.34)$ \\
$\text{SNR}=7$, AG &  $0.021 (0.004)$ &  $0.088 (0.016)$ &  $1.503 (0.329)$ \\
$\text{SNR}=7$, \texttt{ncvreg} &   $0.02 (0.004)$ &  $0.086 (0.017)$ &  $1.416 (0.302)$ \\
$\text{SNR}=10$, AG &  $0.014 (0.003)$ &  $0.059 (0.011)$ &  $1.084 (0.272)$ \\
$\text{SNR}=10$, \texttt{ncvreg} &  $0.014 (0.003)$ &  $0.059 (0.013)$ &  $1.134 (0.248)$ \\
\hline 
\hline 
\multicolumn{1}{c|}{PPV} & $\tau=0.1$ & $0.5$ & $0.9$\\
\hline 
$\text{SNR}=1$, AG &   $0.85 (0.081)$ &  $0.744 (0.161)$ &  $0.616 (0.208)$ \\
$\text{SNR}=1$, \texttt{ncvreg} &  $0.435 (0.085)$ &  $0.407 (0.135)$ &  $0.387 (0.154)$ \\
$\text{SNR}=3$, AG &  $0.842 (0.119)$ &   $0.732 (0.21)$ &  $0.506 (0.286)$ \\
$\text{SNR}=3$, \texttt{ncvreg} &  $0.505 (0.112)$ &  $0.514 (0.121)$ &   $0.366 (0.18)$ \\
$\text{SNR}=7$, AG &  $0.761 (0.175)$ &  $0.646 (0.293)$ &  $0.505 (0.218)$ \\
$\text{SNR}=7$, \texttt{ncvreg} &  $0.541 (0.128)$ &  $0.547 (0.173)$ &  $0.483 (0.201)$ \\
$\text{SNR}=10$, AG &  $0.801 (0.099)$ &  $0.489 (0.134)$ &  $0.375 (0.225)$ \\
$\text{SNR}=10$, \texttt{ncvreg} &  $0.559 (0.107)$ &  $0.476 (0.135)$ &  $0.377 (0.225)$ \\
\hline 
\hline 
\multicolumn{1}{c|}{NPV} & $\tau=0.1$ & $0.5$ & $0.9$\\
\hline 
$\text{SNR}=1$, AG &  $0.983 (0.001)$ &  $0.982 (0.001)$ &  $0.979 (0.001)$ \\
$\text{SNR}=1$, \texttt{ncvreg} &  $0.986 (0.001)$ &  $0.984 (0.001)$ &    $0.979 (0.0)$ \\
$\text{SNR}=3$, AG &  $0.988 (0.001)$ &  $0.986 (0.001)$ &   $0.98 (0.001)$ \\
$\text{SNR}=3$, \texttt{ncvreg} &  $0.989 (0.001)$ &  $0.987 (0.001)$ &   $0.98 (0.001)$ \\
$\text{SNR}=7$, AG &  $0.991 (0.001)$ &  $0.989 (0.001)$ &  $0.981 (0.001)$ \\
$\text{SNR}=7$, \texttt{ncvreg} &  $0.992 (0.001)$ &  $0.989 (0.001)$ &  $0.981 (0.001)$ \\
$\text{SNR}=10$, AG &  $0.992 (0.001)$ &   $0.99 (0.001)$ &  $0.982 (0.001)$ \\
$\text{SNR}=10$, \texttt{ncvreg} &  $0.993 (0.001)$ &   $0.99 (0.001)$ &  $0.982 (0.001)$ \\
\hline 
\hline 
\multicolumn{1}{c|}{$\lvert\hat{\mathcal{A}}\rvert$} & $\tau=0.1$ & $0.5$ & $0.9$\\
\hline 
$\text{SNR}=1$, AG &    $19.7 (4.584)$ &     $20.6 (9.45)$ &    $12.5 (8.163)$ \\
$\text{SNR}=1$, \texttt{ncvreg} &  $51.61 (13.612)$ &  $47.32 (16.093)$ &  $20.25 (11.411)$ \\
$\text{SNR}=3$, AG &   $30.55 (8.437)$ &   $34.52 (16.44)$ &  $25.37 (14.373)$ \\
$\text{SNR}=3$, \texttt{ncvreg} &  $60.14 (15.873)$ &  $48.08 (13.783)$ &   $31.0 (13.981)$ \\
$\text{SNR}=7$, AG &  $44.45 (14.273)$ &  $56.95 (32.804)$ &  $31.96 (25.048)$ \\
$\text{SNR}=7$, \texttt{ncvreg} &   $66.7 (20.364)$ &  $58.36 (24.633)$ &  $33.38 (25.617)$ \\
$\text{SNR}=10$, AG &   $43.23 (11.26)$ &  $64.65 (12.923)$ &  $46.58 (18.186)$ \\
$\text{SNR}=10$, \texttt{ncvreg} &   $65.36 (13.06)$ &  $67.16 (15.483)$ &  $46.07 (19.223)$ \\
\hline 
\end{tabular}

\end{center}
\end{table}

\subsection{Penalized Logistic Regression
\label{sec:Simulations-logistic} }

Figure \eqref{fig:sim-ag-logistic-scad} and \eqref{fig:sim-ag-logistic-mcp}
suggest that much less iterations are needed for our method to achieve
the same amount of descent in comparison of AG with original proposed
settings for penalized logistic models.  

\begin{figure}[H]
\begin{centering}
\includegraphics[scale=1]{./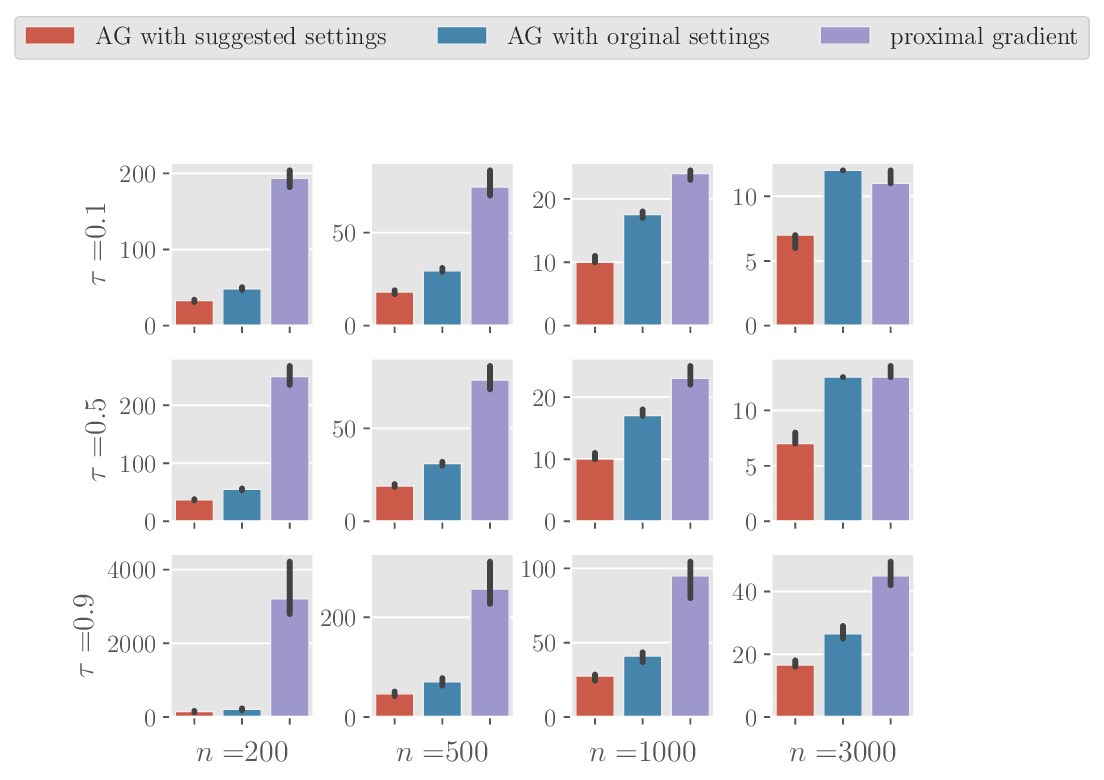}
\par\end{centering}
\caption{Median for the number of iterations required for the iterative objective values to reach $g^{*}+e^{2}$
on SCAD-penalized logistic regression for AG with our proposed hyperparameter
settings, AG with original settings, and proximal gradient over $100$ simulation replications, across varying covariates correlation ($\tau$) and $q/n$ values. The error bars represent the $95\%$ CIs from $1000$ bootstrap replications, $g^*$ represents the minimum per iterate found by the three methods considered.
\label{fig:sim-ag-logistic-scad}}
\end{figure}

\begin{figure}[H]
\begin{centering}
\includegraphics[scale=1]{./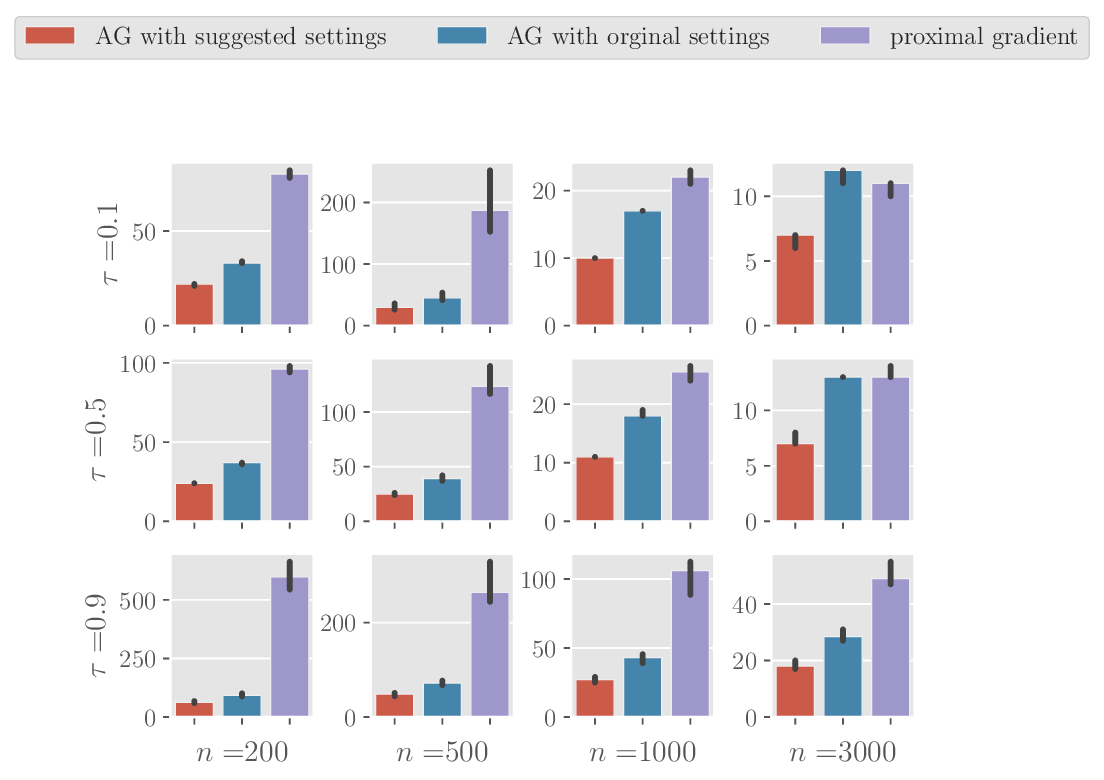}\caption{Median for the number of iterations required for iterative objective values to reach $g^{*}+e^{2}$
on MCP-penalized logistic regression for AG with our proposed hyperparameter
settings, AG with original settings, and proximal gradient over $100$ simulation replications, across varying covariates correlation ($\tau$) and $q/n$ values. The error bars represent the $95\%$ CIs from $1000$ bootstrap replications, $g^*$ represents the minimum per iterate found by the three methods considered.
\label{fig:sim-ag-logistic-mcp}}
\par\end{centering}
\end{figure}

\newpage

\begin{table}[H]
\begin{center}

\caption{Signal recovery performance (sample mean and standard error of $\nicefrac{\left\Vert \boldsymbol{\beta}_{\text{true}}-\hat{\boldsymbol{\beta}}\right\Vert _{2}^{2}}{\left\Vert \boldsymbol{\beta}_{\text{true}}\right\Vert _{2}^{2}}$, Positive/Negative Predictive Values (PPV, NPV), and active set cardinality $\lvert\hat{\mathcal{A}}\rvert$ for signal detection) for {\tt ncvreg} and AG with our proposed hyperparameter settings on SCAD-penalized logistic model over $100$ simulation replications, across varying values of SNRs and covariates correlations ($\tau$). \label{tab:signal-logistic-scad}}
\begin{tabular}{c|ccc}
\hline 
\multicolumn{1}{c|}{$\nicefrac{\left\Vert \boldsymbol{\beta}_{\text{true}}-\hat{\boldsymbol{\beta}}\right\Vert _{2}^{2}}{\left\Vert \boldsymbol{\beta}_{\text{true}}\right\Vert _{2}^{2}}$} & $\tau=0.1$ & $0.5$ & $0.9$\\
\hline 
$\text{SNR}=1$, AG &  $0.768 (0.047)$ &   $0.81 (0.041)$ &   $0.896 (0.04)$ \\
$\text{SNR}=1$, \texttt{ncvreg} &  $0.803 (0.033)$ &   $0.84 (0.033)$ &  $0.903 (0.037)$ \\
$\text{SNR}=3$, AG &  $0.556 (0.057)$ &  $0.656 (0.054)$ &  $0.839 (0.056)$ \\
$\text{SNR}=3$, \texttt{ncvreg} &  $0.603 (0.053)$ &  $0.682 (0.055)$ &  $0.813 (0.053)$ \\
$\text{SNR}=7$, AG &  $0.377 (0.076)$ &  $0.521 (0.073)$ &  $0.779 (0.072)$ \\
$\text{SNR}=7$, \texttt{ncvreg} &  $0.438 (0.054)$ &  $0.537 (0.074)$ &  $0.735 (0.074)$ \\
$\text{SNR}=10$, AG &  $0.311 (0.077)$ &  $0.474 (0.073)$ &  $0.757 (0.079)$ \\
$\text{SNR}=10$, \texttt{ncvreg} &  $0.377 (0.064)$ &  $0.481 (0.079)$ &  $0.712 (0.078)$ \\
\hline 
\hline 
\multicolumn{1}{c|}{PPV} & $\tau=0.1$ & $0.5$ & $0.9$\\
\hline 
$\text{SNR}=1$, AG &    $0.8 (0.079)$ &    $0.779 (0.1)$ &  $0.697 (0.126)$ \\
$\text{SNR}=1$, \texttt{ncvreg} &  $0.221 (0.045)$ &  $0.265 (0.079)$ &  $0.309 (0.169)$ \\
$\text{SNR}=3$, AG &  $0.875 (0.054)$ &  $0.859 (0.065)$ &  $0.765 (0.096)$ \\
$\text{SNR}=3$, \texttt{ncvreg} &  $0.244 (0.052)$ &  $0.273 (0.072)$ &  $0.273 (0.133)$ \\
$\text{SNR}=7$, AG &  $0.901 (0.052)$ &  $0.881 (0.057)$ &  $0.788 (0.098)$ \\
$\text{SNR}=7$, \texttt{ncvreg} &    $0.27 (0.04)$ &  $0.271 (0.079)$ &  $0.267 (0.136)$ \\
$\text{SNR}=10$, AG &  $0.915 (0.048)$ &  $0.899 (0.054)$ &  $0.789 (0.097)$ \\
$\text{SNR}=10$, \texttt{ncvreg} &    $0.29 (0.05)$ &  $0.279 (0.072)$ &   $0.26 (0.123)$ \\
\hline 
\hline 
\multicolumn{1}{c|}{NPV} & $\tau=0.1$ & $0.5$ & $0.9$\\
\hline 
$\text{SNR}=1$, AG &  $0.982 (0.001)$ &   $0.98 (0.001)$ &  $0.978 (0.001)$ \\
$\text{SNR}=1$, \texttt{ncvreg} &  $0.987 (0.002)$ &  $0.985 (0.002)$ &   $0.98 (0.001)$ \\
$\text{SNR}=3$, AG &  $0.985 (0.002)$ &  $0.982 (0.001)$ &  $0.979 (0.001)$ \\
$\text{SNR}=3$, \texttt{ncvreg} &   $0.99 (0.002)$ &  $0.987 (0.002)$ &   $0.98 (0.001)$ \\
$\text{SNR}=7$, AG &  $0.987 (0.002)$ &  $0.984 (0.001)$ &  $0.979 (0.001)$ \\
$\text{SNR}=7$, \texttt{ncvreg} &  $0.992 (0.001)$ &  $0.988 (0.001)$ &   $0.98 (0.001)$ \\
$\text{SNR}=10$, AG &  $0.988 (0.002)$ &  $0.984 (0.001)$ &  $0.979 (0.001)$ \\
$\text{SNR}=10$, \texttt{ncvreg} &  $0.992 (0.001)$ &  $0.988 (0.001)$ &   $0.98 (0.001)$ \\
\hline 
\hline 
\multicolumn{1}{c|}{$\lvert\hat{\mathcal{A}}\rvert$} & $\tau=0.1$ & $0.5$ & $0.9$\\
\hline 
$\text{SNR}=1$, AG &     $17.07 (3.91)$ &     $13.4 (3.365)$ &    $7.62 (2.134)$ \\
$\text{SNR}=1$, \texttt{ncvreg} &  $120.14 (28.882)$ &   $86.49 (24.421)$ &  $39.41 (19.448)$ \\
$\text{SNR}=3$, AG &    $23.34 (4.203)$ &    $16.59 (3.459)$ &    $8.69 (2.082)$ \\
$\text{SNR}=3$, \texttt{ncvreg} &   $134.85 (29.96)$ &   $98.48 (28.434)$ &  $42.47 (15.014)$ \\
$\text{SNR}=7$, AG &     $26.98 (4.58)$ &    $19.46 (3.659)$ &    $9.79 (2.246)$ \\
$\text{SNR}=7$, \texttt{ncvreg} &  $130.33 (22.255)$ &  $105.03 (28.123)$ &  $48.81 (19.059)$ \\
$\text{SNR}=10$, AG &    $27.95 (4.462)$ &    $19.57 (3.141)$ &   $10.24 (2.346)$ \\
$\text{SNR}=10$, \texttt{ncvreg} &  $124.58 (23.016)$ &   $103.49 (27.66)$ &  $50.64 (21.138)$ \\
\hline 
\end{tabular}

\end{center}
\end{table}

\newpage

\begin{table}[H]
\begin{center}

\caption{Signal recovery performance (sample mean and standard error of $\nicefrac{\left\Vert \boldsymbol{\beta}_{\text{true}}-\hat{\boldsymbol{\beta}}\right\Vert _{2}^{2}}{\left\Vert \boldsymbol{\beta}_{\text{true}}\right\Vert _{2}^{2}}$, Positive/Negative Predictive Values (PPV, NPV), and active set cardinality $\lvert\hat{\mathcal{A}}\rvert$ for signal detection) for {\tt ncvreg} and AG with our proposed hyperparameter settings on MCP-penalized logistic model over $100$ simulation replications, across varying values of SNRs and covariates correlations ($\tau$). \label{tab:signal-logistic-mcp}}
\begin{tabular}{c|ccc}
\hline 
\multicolumn{1}{c|}{$\nicefrac{\left\Vert \boldsymbol{\beta}_{\text{true}}-\hat{\boldsymbol{\beta}}\right\Vert _{2}^{2}}{\left\Vert \boldsymbol{\beta}_{\text{true}}\right\Vert _{2}^{2}}$} & $\tau=0.1$ & $0.5$ & $0.9$\\
\hline 
$\text{SNR}=1$, AG &  $0.769 (0.044)$ &  $0.808 (0.041)$ &  $0.897 (0.043)$ \\
$\text{SNR}=1$, \texttt{ncvreg} &  $0.795 (0.036)$ &  $0.829 (0.032)$ &  $0.903 (0.038)$ \\
$\text{SNR}=3$, AG &  $0.555 (0.058)$ &  $0.654 (0.053)$ &  $0.834 (0.054)$ \\
$\text{SNR}=3$, \texttt{ncvreg} &  $0.605 (0.049)$ &  $0.674 (0.054)$ &  $0.825 (0.057)$ \\
$\text{SNR}=7$, AG &   $0.383 (0.08)$ &  $0.521 (0.069)$ &   $0.779 (0.07)$ \\
$\text{SNR}=7$, \texttt{ncvreg} &  $0.438 (0.057)$ &   $0.533 (0.07)$ &  $0.761 (0.071)$ \\
$\text{SNR}=10$, AG &   $0.31 (0.079)$ &  $0.469 (0.073)$ &  $0.753 (0.076)$ \\
$\text{SNR}=10$, \texttt{ncvreg} &  $0.381 (0.061)$ &   $0.48 (0.082)$ &  $0.737 (0.077)$ \\
\hline 
\hline 
\multicolumn{1}{c|}{PPV} & $\tau=0.1$ & $0.5$ & $0.9$\\
\hline 
$\text{SNR}=1$, AG &   $0.879 (0.06)$ &  $0.859 (0.058)$ &  $0.779 (0.087)$ \\
$\text{SNR}=1$, \texttt{ncvreg} &  $0.372 (0.068)$ &  $0.401 (0.106)$ &  $0.375 (0.157)$ \\
$\text{SNR}=3$, AG &   $0.906 (0.05)$ &   $0.889 (0.05)$ &  $0.805 (0.086)$ \\
$\text{SNR}=3$, \texttt{ncvreg} &   $0.43 (0.065)$ &  $0.445 (0.106)$ &  $0.395 (0.126)$ \\
$\text{SNR}=7$, AG &  $0.919 (0.044)$ &   $0.903 (0.05)$ &  $0.809 (0.102)$ \\
$\text{SNR}=7$, \texttt{ncvreg} &  $0.463 (0.063)$ &   $0.45 (0.104)$ &  $0.417 (0.145)$ \\
$\text{SNR}=10$, AG &  $0.918 (0.045)$ &  $0.911 (0.038)$ &  $0.804 (0.111)$ \\
$\text{SNR}=10$, \texttt{ncvreg} &  $0.502 (0.069)$ &  $0.468 (0.095)$ &  $0.412 (0.137)$ \\
\hline 
\hline 
\multicolumn{1}{c|}{NPV} & $\tau=0.1$ & $0.5$ & $0.9$\\
\hline 
$\text{SNR}=1$, AG &  $0.981 (0.001)$ &   $0.98 (0.001)$ &  $0.978 (0.001)$ \\
$\text{SNR}=1$, \texttt{ncvreg} &  $0.986 (0.002)$ &  $0.983 (0.001)$ &  $0.978 (0.001)$ \\
$\text{SNR}=3$, AG &  $0.985 (0.002)$ &  $0.982 (0.001)$ &  $0.979 (0.001)$ \\
$\text{SNR}=3$, \texttt{ncvreg} &  $0.989 (0.002)$ &  $0.985 (0.001)$ &  $0.979 (0.001)$ \\
$\text{SNR}=7$, AG &  $0.987 (0.002)$ &  $0.984 (0.001)$ &   $0.98 (0.001)$ \\
$\text{SNR}=7$, \texttt{ncvreg} &  $0.991 (0.002)$ &  $0.986 (0.001)$ &   $0.98 (0.001)$ \\
$\text{SNR}=10$, AG &  $0.988 (0.002)$ &  $0.984 (0.001)$ &   $0.98 (0.001)$ \\
$\text{SNR}=10$, \texttt{ncvreg} &  $0.991 (0.001)$ &  $0.987 (0.001)$ &   $0.98 (0.001)$ \\
\hline 
\end{tabular}
\begin{tabular}{c|ccc}
\hline 
\multicolumn{1}{c|}{$\lvert\hat{\mathcal{A}}\rvert$} & $\tau=0.1$ & $0.5$ & $0.9$\\
\hline 
$\text{SNR}=1$, AG &   $13.86 (3.082)$ &   $11.42 (2.776)$ &   $6.72 (1.744)$ \\
$\text{SNR}=1$, \texttt{ncvreg} &  $59.83 (14.138)$ &   $42.1 (12.546)$ &  $19.72 (8.393)$ \\
$\text{SNR}=3$, AG &   $21.86 (4.313)$ &   $15.84 (3.036)$ &   $8.84 (1.938)$ \\
$\text{SNR}=3$, \texttt{ncvreg} &  $66.57 (13.203)$ &    $48.28 (14.5)$ &  $22.81 (9.784)$ \\
$\text{SNR}=7$, AG &   $25.75 (4.776)$ &   $18.78 (3.189)$ &  $10.33 (2.565)$ \\
$\text{SNR}=7$, \texttt{ncvreg} &  $69.44 (11.876)$ &  $52.54 (13.638)$ &  $24.63 (8.741)$ \\
$\text{SNR}=10$, AG &   $27.53 (4.649)$ &   $19.55 (3.093)$ &  $11.06 (2.877)$ \\
$\text{SNR}=10$, \texttt{ncvreg} &  $65.38 (10.776)$ &  $51.66 (12.785)$ &  $25.59 (9.428)$ \\
\hline 
\end{tabular}

\end{center}
\end{table}

\newpage

\printbibliography

\end{document}